\tikzset{empty/.style={black, fill=white}}
\tikzstyle{empty}=[fill=none, draw=black, shape=circle, tikzit fill=white]
\tikzstyle{full}=[fill=black, draw=black, shape=circle, tikzit shape=circle]
\tikzstyle{red}=[fill=none, draw=none, shape=circle, tikzit draw=black, tikzit fill=red]
\tikzstyle{pink}=[fill=white, draw=black, shape=circle, tikzit draw=black, tikzit fill={rgb,255: red,160; green,6; blue,255}]
\tikzstyle{green}=[fill=white, draw=black, shape=circle, tikzit fill={rgb,255: red,0; green,181; blue,12}, tikzit draw=black]
\tikzstyle{orange}=[fill=white, draw=black, shape=circle, tikzit fill={rgb,255: red,255; green,174; blue,34}, tikzit draw=black]
\tikzstyle{blue}=[fill={rgb,255: red,64; green,0; blue,255}, draw=black, shape=circle]
\tikzstyle{edgeD}=[-, dashed=dashed, draw=black, fill=none]
\tikzstyle{edgeGreen}=[-, draw={rgb,255: red,0; green,181; blue,12}]
\tikzstyle{edgeOrange}=[-, draw={rgb,255: red,255; green,174; blue,34}]
\tikzstyle{edgeRed}=[-, draw=red]
\newtheorem{Theorem}{Theorem}[section]
\newtheorem{Lemma}[Theorem]{Lemma}
\newtheorem{Corollary}[Theorem]{Corollary}
\newtheorem{Proposition}[Theorem]{Proposition}
\newtheorem{Remark}[Theorem]{Remark}
\newtheorem{Example}[Theorem]{Example}
\newtheorem{Definition}[Theorem]{Definition}
\newtheorem{Conjecture}[Theorem]{Conjecture}
\def\qed{\ifhmode\textqed\fi
	\ifmmode\ifinner\hfill\quad\qedsymbol\else\dispqed\fi\fi}
\def\textqed{\unskip\nobreak\penalty50
	\hskip2em\hbox{}\nobreak\hfill\qedsymbol
	\parfillskip=0pt \finalhyphendemerits=0}
\def\dispqed{\rlap{\qquad\qedsymbol}}
\def\reg{\textup{reg}}
\def\pd{\textup{proj\,dim}}
\def\depth{\textup{depth}}
\def\FF{\mathbb{F}}
\begin{document}

	\title{The size of the betti table of binomial edge ideals}
	\author{Antonino Ficarra, Emanuele Sgroi}
	
	\address{Antonino Ficarra, Department of mathematics and computer sciences, physics and earth sciences, University of Messina, Viale Ferdinando Stagno d'Alcontres 31, 98166 Messina, Italy}
	\email{antficarra@unime.it}
	
	\address{Emanuele Sgroi, Department of mathematics and computer sciences, physics and earth sciences, University of Messina, Viale Ferdinando Stagno d'Alcontres 31, 98166 Messina, Italy}
	\email{emasgroi@unime.it}
	
	\thanks{.
	}
	
	\subjclass[2020]{Primary 13F20; Secondary 13H10}
	
	\keywords{binomial edge ideals, betti tables, projective dimension, regularity}
	
	\maketitle
	
	\begin{abstract}
		Let $G$ be a finite simple graph on $n$ non-isolated vertices, and let $J_G$ be its binomial edge ideal. We determine almost all pairs $(\pd(J_G),\reg(J_G))$, where $G$ ranges over all finite simple graphs on $n$ non-isolated vertices, for any $n$.
	\end{abstract}
	
	\section*{Introduction}
	One never-ending source of inspiration in Combinatorial Commutative Algebra is the study of the minimal free resolutions of graded ideals. Let $R=K[x_1,\dots,x_n]$ be the standard graded polynomial ring over a field $K$, and let $I\subset R$ be a graded ideal. The behaviour of the minimal resolution of $I$ is hard to predict. Two important homological invariants of $I$, that provide a measure of the complexity of its minimal resolution, are the \textit{projective dimension}, $\pd(I)$, and the \textit{regularity}, $\reg(I)$. The pair $(\pd(I),\reg(I))$ determines the size of the \textit{Betti table} of $I$.
	
	A central question is the following. For a given class $\mathcal{C}$ of graded ideals of $R$, can we determine the set of the sizes of the Betti tables of the ideals in $\mathcal{C}$? That is, can we determine all pairs $(\pd(I),\reg(I))$, $I\in\mathcal{C}$? Such a problem is very difficult, and the behaviour of these pairs is quite mysterious. On the other hand, if the graded ideals in $\mathcal{C}$ arise from combinatorics, then their combinatorial nature helps us to better understand and sometimes also answer such a question.
	
	A problem of the type discussed above is considered in \cite{HH21}. Hereafter, by a graph $G$ we mean a finite simple graph. Recall that the \textit{edge ideal} $I(G)$ of $G$ is the ideal generated by the monomials $x_ix_j$ where $\{i,j\}$ is an edge of $G$. In \cite{HH21}, H\`a and Hibi studied the question of determining all admissible pairs $(\pd(I(G)),\reg(I(G)))$, as $G$ ranges over all graphs on a given number of vertices. This question is related to the search of a \textsc{max min vertex cover} and a \textsc{min max independent set}. These classical problems of graph theory are known to be NP-hard, and they received a lot of attention lately \cite{BDCP15,BDCEP13,CHLN07,H93}. On the other hand, in \cite{HH21} the combinatorics of $G$ yields the following surprising lower bound: $\pd(I(G))\ge 2\sqrt{n}-3$. The authors determined all pairs $(\pd(I(G)),\reg(I(G)))$ when the projective dimension reaches this lower bound, and also when the regularity reaches its minimal possible value, namely $\reg(I(G))=2$. For the class of connected bipartite graphs, the H\`a--Hibi problem was completely solved by Erey and Hibi in \cite{EH20}. Similar questions are treated in \cite{HKKMV21,HKM19,HKKMT21,HKMV20,HM21,HKU23,KS20} and the references therein.
	
	Another family of graded ideals arising from graphs is that of \textit{binomial edge ideals}. In 2010, Herzog, Hibi, Hreinsdóttir, Kahle and Rauh in \cite{HHHKR10}, and independently, Ohtani in \cite{O11}, introduced the \textit{binomial edge ideal}. Let $G$ be a graph on $n$ vertices $1,\dots,n$. Then $J_G$ is defined to be the graded ideal of $S=K[x_1,\dots,x_n,y_1,\dots,y_n]$ generated by the binomials $x_iy_j-x_jy_i$ for all edges $\{i,j\}$ of $G$. This class of ideals generalizes the classical determinantal ideals. Indeed, $J_G$ may be seen as the ideal generated by an arbitrary set of maximal minors of a $(2\times n)$-matrix of indeterminates. A huge effort has been made to understand the homological properties of binomial edge ideals. Consult the surveys \cite{MD22,M-Survey} for the current state of art.
	
	In this article, we address the H\`a--Hibi problem for the class of binomial edge ideals. One would guess, as in the case of edge ideals, that this problem is difficult, and that answering this question in an explicit fashion may not be possible. Quite surprisingly, we succeed to deliver a fairly comprehensive and explicit solution.
	
	Let $n\ge1$ be an integer. Denote by $\textup{Graphs}(n)$ the class of all finite simple graphs on $n$ non-isolated vertices. Then, we define
	$$
	\textup{pdreg}(n)\ =\ \big\{(\pd(J_G),\reg(J_G)):G\in\textup{Graphs}(n)\big\},
	$$
	which is the set of the sizes of the Betti tables of $J_G$, as $G$ ranges over all graphs on $n$ non-isolated vertices. Our main result in this article is the following theorem.\medskip\\
	\textbf{Theorem \ref{Thm:pdreg(n)}} \textit{For all $n\ge3$,
	\begin{equation*}
	\begin{aligned}
	\textup{pdreg}(n)\ =\ \big\{(n-2,2),(n-2,n)\big\}\cup\bigcup_{r=3}^{\lfloor\frac{n}{2}\rfloor+1}\big(\bigcup_{p=n-r}^{2n-5}\{(p,r)\}\big)\ \cup\ \\
	\cup \bigcup_{r=\lceil\frac{n}{2}\rceil+1}^{n-2}\big(\bigcup_{p=r-2}^{2n-5}\{(p,r)\}\big)\cup A_n,
	\end{aligned}
	\end{equation*}
	where $A_n=\{(p,r)\in\textup{pdreg}(n):r=n-1\}$.}\medskip
	
	The reader may see that when the regularity is $n-1$ we leave the set $A_n$ not determined. Indeed, our experiments show a quite unexpected behaviour.\medskip\\
	\textbf{Conjecture \ref{Conj:reg=n-1}} Let $G$ be a graph on $n\ge7$ non-isolated vertices. Suppose that $\reg(J_G)=n-1$. Then $\pd(J_G)\le n$.\medskip
	
	The article is structured as follows. In Section \ref{FS22:sec1}, we state some general bounds for the projective dimension and the regularity of binomial edge ideals. The proof of Theorem \ref{Thm:pdreg(n)} is by induction on the number of vertices of the graph. On the other hand, there are some special graphs giving some of the pairs $(p,r)\in\textup{pdreg}(n)$ that we did not obtain by inductive arguments. In Section \ref{FS22:sec2}, we discuss these special classes of graphs. They give the pairs $(p,r)\in\textup{pdreg}(n)$ with $p=2n-5,2n-6$, or $r=3,n-2$. Section \ref{FS22:sec3} contains the main result in the article. Our answer is nearly complete. Indeed, only for the graphs with almost maximal regularity, namely $\reg(J_G)=n-1$, we do not know yet the projective dimension. It would be interesting to classify the binomial edge ideals with almost maximal regularity.
	
	We gratefully acknowledge the use of \textit{Macaulay2} \cite{GDS} and in particular of the package \texttt{NautyGraphs} \cite{MPNauty}.

	\section{General bounds for the betti table of binomial edge ideals}\label{FS22:sec1}
	
	Let $I$ be a graded ideal of a standard graded polynomial ring $R=K[x_1,\dots,x_n]$, where $K$ is a field. Then $I$ possesses a unique minimal graded free resolution
	$$
	\FF\ \ :\ \ \cdots\rightarrow F_i\rightarrow F_{i-1}\rightarrow\dots\rightarrow F_1\rightarrow F_0\rightarrow I\rightarrow0,
	$$
	with $F_i=\bigoplus_jR(-j)^{\beta_{i,j}(I)}$, where the $\beta_{i,j}(I)$ are the \textit{graded Betti numbers} of $I$. The \textit{projective dimension} and the \textit{regularity} of $I$, are, respectively,
	\begin{align*}
	\pd(I)\ &=\ \max\{i:\beta_{i,j}(I)\ne0,\ \text{for some}\ j\},\\
	\reg(I)\ &=\ \max\{j-i:\beta_{i,j}(I)\ne0,\ \text{for some}\ i\ \text{and}\ j\}.
	\end{align*}
	
	Throughout the article, we consider only finite simple graphs. Hence we will refer to them simply as graphs. Let $G$ be a graph. By $V(G)$ we denote the \textit{vertex set} of $G$, and by $E(G)$ the \textit{edge set} of $G$. Two vertices $u$ and $v$ are \textit{adjacent} if $\{u,v\}\in E(G)$. A vertex $u$ is called \textit{isolated} if $\{u,v\}\notin E(G)$ for all $v\in V(G)\setminus\{u\}$.
	
	Let $K$ be a field and $G$ be a graph with vertex set $\{1,\dots,n\}$. The \textit{binomial edge ideal} $J_G$ of $G$ is the following binomial ideal of $S=K[x_1,\dots,x_n,y_1,\dots,y_n]$:
	$$
	J_G=(x_iy_j-x_jy_i\ :\ \{i,j\}\in E(G)).
	$$
	
	Let $G$ be a graph. If $W$ is a subset of $V(G)$ we denote by $G_W$ the \textit{induced subgraph of $G$ on $W$}, that is $V(G_W)=W$ and $E(G_W)=\{\{u,v\}\in E(G):u,v\in W\}$. It is known by \cite[Corollary 2.2]{MM13} that
	$$
	\pd(J_G)\ge\pd(J_{G_W})\ \ \text{and}\ \ \reg(J_G)\ge\reg(J_{G_W}),
	$$
	for all subsets $W$ of $V(G)$. We will use freely this fact.
	
	Suppose that $G$ is connected. Then, we say that $G$ is \textit{$\ell$-vertex-connected} if for all subsets $W$ of $V(G)$ with $|W|<\ell$, the induced subgraph $G_{V(G)\setminus W}$ is connected. The \textit{vertex-connectivity} of $G$, denoted by $\ell(G)$, is the maximum integer $\ell$ such that $G$ is $\ell$-vertex-connected. It is clear that $\ell(G)\ge1$.
	
	We denote by $K_n$ the \textit{complete graph} on $n$ vertices, that is $V(K_n)=\{1,\dots,n\}$ and $\{i,j\}\in E(K_n)$ for all $i,j\in V(K_n)$, $i\ne j$. Whereas, by $P_n$ we denote the \textit{path} of length $n-1$, that is $V(P_n)=\{1,\dots,n\}$ and $E(P_n)=\{\{1,2\},\{2,3\},\ldots,\{n-1,n\}\}$.
		
	\begin{Theorem}\label{Thm:pdGconnect}
		Let $G$ be a graph on $n\ge3$ non-isolated vertices. Then
		$$
		\pd(J_G)\le 2n-5.
		$$
		Furthermore, if $G$ is connected on $n\ge2$ vertices, then $\pd(J_G)\ge n-2$.
	\end{Theorem}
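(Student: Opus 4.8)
The plan is to pass to the quotient ring $S/J_G$ and use the Auslander--Buchsbaum formula. Since $\dim S=2n$, we have $\pd(J_G)=\pd(S/J_G)-1=2n-\depth(S/J_G)-1$, so the upper bound $\pd(J_G)\le 2n-5$ is equivalent to $\depth(S/J_G)\ge 4$, while the lower bound $\pd(J_G)\ge n-2$ (for connected $G$) is equivalent to $\pd(S/J_G)\ge n-1$.

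The lower bound is the easy half. For connected $G$ it is classical, from Herzog--Hibi--Hreinsdóttir--Kahle--Rauh \cite{HHHKR10}, that $\mathrm{ht}(J_G)=n-1$. Since $S$ is a polynomial ring, $\dim(S/J_G)=2n-\mathrm{ht}(J_G)$, and as $\depth(S/J_G)\le\dim(S/J_G)$ we get $\pd(S/J_G)=2n-\depth(S/J_G)\ge\mathrm{ht}(J_G)=n-1$, hence $\pd(J_G)\ge n-2$.

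For the upper bound I would prove $\depth(S/J_G)\ge 4$ for every $G$ on $n\ge3$ non-isolated vertices. First one reduces to the connected case: if $G$ is a disjoint union of components $G_i$, then $S/J_G$ is a tensor product of the $S_i/J_{G_i}$, so $\depth(S/J_G)=\sum_i\depth(S_i/J_{G_i})$; since there are no isolated vertices each component has at least two vertices, and $\depth(S_i/J_{K_2})=3$ (a hypersurface in four variables), so a disconnected $G$ already gives $\depth\ge6$. The heart of the matter is therefore the claim that \emph{connected $G$ on $n\ge3$ vertices has $\depth(S/J_G)\ge4$}, which I would prove by induction. The tool is Ohtani's decomposition $J_G=J_{G_v}\cap\big((x_v,y_v)+J_{G\setminus v}\big)$ from \cite{O11}, where $G_v$ completes the neighbourhood $N(v)$ to a clique, which yields the Mayer--Vietoris sequence
\[
0\longrightarrow S/J_G\longrightarrow (S/J_{G_v})\oplus(\bar S/J_{G\setminus v})\longrightarrow \bar S/J_H\longrightarrow 0,
\]
with $\bar S=K[x_i,y_i:i\ne v]$ and $H=(G\setminus v)\cup K_{N(v)}$ the graph on $n-1$ vertices obtained by completing $N(v)$. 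The depth lemma gives $\depth(S/J_G)\ge\min\{\depth(S/J_{G_v}),\,\depth(\bar S/J_{G\setminus v}),\,\depth(\bar S/J_H)+1\}$, and the goal is to bound the three terms by $4$, $4$ and $3$. For the $(n-1)$-vertex graphs $G\setminus v$ and $H$ one combines the inductive hypothesis with the component reduction above; here a key point is that these graphs may be disconnected or acquire isolated vertices, but isolated vertices only introduce polynomial factors (each raising depth by $2$) while every genuine component contributes at least $3$, so in fact $\depth(\bar S/J_{G\setminus v})\ge4$ and $\depth(\bar S/J_H)\ge3$ for all choices of $v$. Combining, $\depth(S/J_G)\ge\min\{4,4,3+1\}=4$.

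The main obstacle is propagating the bound $\depth\ge4$ through the third term $\depth(S/J_{G_v})$: because $G_v$ again has $n$ vertices, a plain induction on the number of vertices fails. I would resolve this with a secondary induction on the number of non-edges of $G$, with base case $G=K_n$, for which $S/J_{K_n}$ is a Cohen--Macaulay determinantal ring of dimension $n+1\ge4$; choosing $v$ non-simplicial (possible whenever a connected $G\ne K_n$) makes $G_v$ have strictly more edges, so $\depth(S/J_{G_v})\ge4$ by the secondary hypothesis. The delicate bookkeeping is thus to verify that, across all cases of the chosen vertex $v$, the auxiliary graphs never make the relevant depths fall below the thresholds $4$ and $3$; the base computations for $K_2$ and $K_n$ together with the tensor-product reduction are exactly what make this work.
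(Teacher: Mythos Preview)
Your upper-bound argument is essentially the proof of \cite[Theorem 5.2]{MMK21a}, which the paper simply cites; the double induction (primary on $n$, secondary on the number of non-edges so that $G_v$ has strictly fewer) together with the component/isolated-vertex bookkeeping is the right mechanism, and your thresholds $4,4,3$ for the three terms in the depth lemma are correct for $n\ge4$ once the base cases $K_n$ and $P_3$ are checked.

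The lower-bound argument, however, has a genuine error. It is \emph{not} true that $\mathrm{ht}(J_G)=n-1$ for every connected $G$; binomial edge ideals are typically far from unmixed. What \cite{HHHKR10} gives is that $P_\emptyset=J_{K_n}$ is always a minimal prime of height $n-1$, but other minimal primes $P_T$ can have much smaller height $n+|T|-c(T)$ whenever removing $T$ creates many components. For the star $G=K_{1,n-1}$ with centre $v$, the set $T=\{v\}$ yields the minimal prime $P_{\{v\}}=(x_v,y_v)$ of height $2$, so $\mathrm{ht}(J_G)=2$ and hence $\dim(S/J_G)=2n-2$. Your chain $\pd(S/J_G)=2n-\depth(S/J_G)\ge 2n-\dim(S/J_G)=\mathrm{ht}(J_G)$ then only gives $\pd(J_G)\ge1$, not $\pd(J_G)\ge n-2$. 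In other words, the inequality $\depth\le\dim$ is too weak here precisely because $S/J_G$ is far from equidimensional.

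The paper's route avoids this by splitting into two cases: for $G=K_n$ the Eagon--Northcott complex gives $\pd(J_{K_n})=n-2$ exactly, while for connected non-complete $G$ it invokes the vertex-connectivity bound of Banerjee and N\'u\~nez-Betancourt \cite[Theorems 3.19, 3.20]{BNB17}, namely $\pd(S/J_G)\ge n+\ell(G)-2$, and uses $\ell(G)\ge1$. If you want to repair your argument without that citation, one option is to observe that since $P_\emptyset=J_{K_n}$ is a minimal prime of $J_G$, the surjection $S/J_G\twoheadrightarrow S/J_{K_n}$ identifies $S/J_{K_n}$ with a quotient by a minimal prime, and then argue with local cohomology or with $\mathrm{Ext}$ vanishing that $\pd(S/J_G)\ge\pd(S/J_{K_n})=n-1$; but this still requires more than the bare height computation you wrote.
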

	\begin{proof}
		By the work of \cite[Theorem 5.2]{MMK21a} it is known that $\depth(S/J_G)\ge4$ for all graphs $G$. Therefore, by the Auslander--Buchsbaum formula we have
		$$
		\pd(J_G)=\pd(S/J_G)-1=2n-\depth(S/J_G)-1\le 2n-5.
		$$
		Suppose that $G$ is connected and not complete. By \cite[Theorems 3.19 and 3.20]{BNB17} we have $\pd(J_G)=\pd(S/J_G)-1\ge n+\ell(G)-3$. Since $\ell(G)\ge1$, we obtain $\pd(J_G)\ge n-2$. Else, if $G=K_n$ is the complete graph, then $\pd(J_G)=n-2$ because the Eagon-Northcott complex is the minimal free resolution of $J_{K_n}$.
	\end{proof}
	
	\begin{Theorem}\label{Thm:regGbounds}
		Let $G$ be a graph on $n\ge2$ non-isolated vertices. Then
		$$
		2\le\reg(J_G)\le n.
		$$
		Moreover,
		\begin{enumerate}
			\item[\textup{(i)}] $\reg(J_G)=2$ if and only if $G=K_n$ and, in this case, $\pd(J_{K_n})=n-2$.
			\item[\textup{(ii)}] $\reg(J_G)=n$ if and only if $G=P_n$ and, in this case, $\pd(J_{P_n})=n-2$.
		\end{enumerate}
	\end{Theorem}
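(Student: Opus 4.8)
The plan is to decouple the two inequalities from the two extremal classifications, then handle the boundary graphs $K_n$ and $P_n$ by reducing to known structural results. For the general bounds: since $G$ has $n\ge2$ non-isolated vertices it has at least one edge, so $J_G$ is a nonzero ideal generated in degree $2$; hence $\reg(J_G)\ge2$, because the regularity of a nonzero graded ideal is at least the top degree of a minimal generator. For the upper bound $\reg(J_G)\le n$ I would invoke the Matsuda--Murai regularity bound of \cite{MM13}, which also supplies a companion lower bound in terms of the longest induced path of $G$; this second estimate is precisely what makes paths extremal, since for $G=P_n$ it yields $\reg(J_{P_n})\ge n$.

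Before either extremal case I would record a reduction to connected graphs. If $G=G_1\sqcup G_2$ is a disjoint union on disjoint variable sets, then $S/J_G$ is the tensor product over $K$ of the two quotients, so $\reg(J_G)=\reg(J_{G_1})+\reg(J_{G_2})-1$; iterating, a graph with $c$ components of sizes $n_1,\dots,n_c$ satisfies $\reg(J_G)\le\sum_i n_i-(c-1)=n-c+1$. Since each $\reg(J_{G_i})\ge2$, both $\reg(J_G)=2$ and $\reg(J_G)=n$ force $c=1$, i.e. $G$ is connected. This lets me assume connectivity throughout the equality analysis.

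For (i), the point is that $\reg(J_G)=2$ is equivalent to $J_G$ having a $2$-linear resolution, as $J_G$ is generated in degree $2$. If $G=K_n$, then $J_{K_n}$ is the ideal of $2\times2$ minors of a generic $2\times n$ matrix, minimally resolved by the Eagon--Northcott complex, which is linear of length $n-1$; hence $\reg(J_{K_n})=2$ and $\pd(J_{K_n})=n-2$. Conversely, having reduced to connected $G$, I would cite the classification that $J_G$ admits a linear resolution exactly when $G$ is complete, forcing $G=K_n$.

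For (ii), the backward direction combines the induced-path lower bound above (giving $\reg(J_{P_n})\ge n$) with $\reg\le n$ to obtain equality, while $\pd(J_{P_n})=n-2$ follows from the Cohen--Macaulayness of $S/J_{P_n}$: its Krull dimension is $n+1$, so Auslander--Buchsbaum gives $\pd(J_{P_n})=2n-(n+1)-1=n-2$. The forward direction is the crux of the whole statement: among connected graphs on $n$ vertices I must show that only the path attains $\reg(J_G)=n$. This is the extremal case of the Matsuda--Murai upper bound and is the main obstacle; rather than re-derive it I would appeal to the known classification of the connected graphs of maximal regularity, which pins $G$ down to $P_n$.
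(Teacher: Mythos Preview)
Your proposal is correct and follows essentially the same route as the paper: the lower bound from generation in degree two, the upper bound from Matsuda--Murai \cite{MM13}, the linear-resolution classification \cite{MK12} for (i), and the maximal-regularity classification \cite{KM16} for (ii). The reduction to connected graphs and the explicit Eagon--Northcott and Cohen--Macaulayness computations for $\pd(J_{K_n})$ and $\pd(J_{P_n})$ are helpful elaborations, but they simply unpack what the paper leaves to its citations.
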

	\begin{proof}
		Since $J_G$ is generated in degree two, $\reg(J_G)\ge2$. By \cite[Theorem 1.1]{MM13}, $\reg(J_G)\le|V(G)|=n$. Statement (i) follows from \cite[Theorem 2.1]{MK12}. Statement (ii) follows from \cite[Theorem 3.2]{KM16} and \cite[Theorem 1.1]{MM13}.
	\end{proof}
	
	The following observation will be used several times.
	\begin{Remark}\label{Rem:Gdecomp}
		\rm Suppose $G=G_1\sqcup G_2\sqcup\dots\sqcup G_c$ is a graph without isolated vertices and $c$ connected components $G_i$, $i=1,\dots,c$. Here $\sqcup$ denotes the disjoint union of the graphs $G_i$. Let $S_i=K[x_v,y_v:v\in V(G_i)]$ and $n_i=|V(G_i)|$, for all $i$. Then $\sum_{i=1}^cn_i=n$ and $J_{G_i}$ is a binomial ideal of $S_i$. Since the polynomial rings $S_i$ are in pairwise disjoint sets of variables, we have
		$$
		S/J_G\cong\bigotimes_{i=1}^cS_i/J_{G_i}.
		$$
		Hence, $\pd(S/J_G)=\sum_{i=1}^c\pd(S_i/J_{G_i})$ and $\reg(S/J_G)=\sum_{i=1}^c\reg(S_i/J_{G_i})$. Taking into account that for a graded ideal $I$ of a polynomial ring $R$ we have $\pd(R/I)=\pd(I)+1$ and $\reg(R/I)=\reg(I)-1$, we obtain the following useful identities,
		\begin{align}
		\label{eq:pdJGc}\pd(J_G)\ &=\ \sum_{i=1}^c\pd(J_{G_i})+(c-1),\\
		\label{eq:regJGc}\reg(J_G)\ &=\ \sum_{i=1}^c\reg(J_{G_i})-(c-1).
		\end{align}
	\end{Remark}
	
	Next we provide a different lower bound for the projective dimension of a binomial edge ideal, in terms of the regularity.
	\begin{Proposition}\label{Prop:refinedPd1}\label{Prop:refinedPd2}
		Let $n\ge3$, $r$ be positive integers with $3\le r\le n-1$. Then
		$$
		\max\{n-r,r-2\}\le\pd(J_G)\le 2n-5,
		$$
		for all graphs $G$ on $n$ non-isolated vertices such that $\reg(J_G)=r$.
	\end{Proposition}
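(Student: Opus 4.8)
The upper bound $\pd(J_G)\le 2n-5$ is already available: it is exactly the first assertion of Theorem \ref{Thm:pdGconnect}, valid for every graph on $n\ge 3$ non-isolated vertices, so nothing new is needed there. The substance of the statement is the lower bound $\max\{n-r,r-2\}\le\pd(J_G)$, and my plan is to reduce it to the connected case and then feed the component-wise estimates through the additive decomposition formulas of Remark \ref{Rem:Gdecomp}. First I would write $G=G_1\sqcup\dots\sqcup G_c$ as a disjoint union of its connected components, with $n_i=|V(G_i)|$. Since $G$ has no isolated vertices, each $G_i$ is connected on $n_i\ge2$ vertices, so Theorem \ref{Thm:pdGconnect} gives $\pd(J_{G_i})\ge n_i-2$ and Theorem \ref{Thm:regGbounds} gives $2\le\reg(J_{G_i})\le n_i$. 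Writing $r=\reg(J_G)$ and invoking \eqref{eq:pdJGc} and \eqref{eq:regJGc}, the whole argument becomes a matter of combining these per-component bounds.

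For the bound $n-r\le\pd(J_G)$, I would use $\pd(J_{G_i})\ge n_i-2$ together with \eqref{eq:pdJGc} to obtain $\pd(J_G)\ge\sum_i(n_i-2)+(c-1)=n-c-1$. Then I would control the number of components from below via the regularity: since $\reg(J_{G_i})\ge2$, identity \eqref{eq:regJGc} forces $r\ge 2c-(c-1)=c+1$, i.e. $c\le r-1$; substituting this yields $\pd(J_G)\ge n-c-1\ge n-r$. For the bound $r-2\le\pd(J_G)$, the key observation is that on each component the two theorems combine into $\reg(J_{G_i})-2\le n_i-2\le\pd(J_{G_i})$. Plugging this into \eqref{eq:pdJGc} and using \eqref{eq:regJGc} to replace $\sum_i\reg(J_{G_i})$ by $r+(c-1)$ gives $\pd(J_G)\ge\sum_i\reg(J_{G_i})-2c+(c-1)=r-2$, as wanted.

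There is no serious obstacle here: once one has the connectedness bound $\pd(J_{G_i})\ge n_i-2$ and the regularity bound $\reg(J_{G_i})\le n_i$, both halves of the lower bound drop out of the identities in Remark \ref{Rem:Gdecomp}. The only point requiring a little care is the bookkeeping with the number of components $c$: for the first inequality one must exploit $\reg(J_{G_i})\ge 2$ to cap $c$ by $r-1$, whereas for the second the $\pm(c-1)$ corrections in the two identities cancel cleanly against the $2c$ coming from the per-component comparison $\reg-2\le\pd$. The hypothesis $3\le r\le n-1$ plays no role beyond ensuring that $\max\{n-r,r-2\}$ is the genuinely informative quantity.
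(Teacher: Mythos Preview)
Your proof is correct and uses the same ingredients as the paper: the upper bound is Theorem~\ref{Thm:pdGconnect}, and the lower bounds come from combining the per-component estimates $\pd(J_{G_i})\ge n_i-2$ and $2\le\reg(J_{G_i})\le n_i$ with the additive formulas of Remark~\ref{Rem:Gdecomp}. For the inequality $\pd(J_G)\ge n-r$ your argument is verbatim the paper's. For $\pd(J_G)\ge r-2$ there is a small but pleasant difference: the paper first bounds the number of components by $c\le n-r+1$ (using $\reg(J_{G_i})\le n_i$) and then reuses the estimate $\pd(J_G)\ge n-c-1$ already obtained; you instead feed the per-component comparison $\reg(J_{G_i})-2\le n_i-2\le\pd(J_{G_i})$ directly into \eqref{eq:pdJGc} and let the $\pm(c-1)$ terms cancel. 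Both routes are equally short; yours has the mild advantage of not needing to track $c$ separately in the second case.
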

	\begin{proof}
		The upper bound for $\pd(J_G)$ is stated in Theorem \ref{Thm:pdGconnect}. To prove the lower bound, assume the notation of Remark \ref{Rem:Gdecomp}. Notice that
		$$
		\max\{n-r,r-2\}\ =\ \begin{cases}
			n-r,&\textup{if}\,\ 3\le r\le\lfloor\frac{n}{2}\rfloor+1,\\
			r-2,&\textup{if}\,\ \lceil\frac{n}{2}\rceil+1\le r\le n-1.
		\end{cases}
		$$
		
		In view of this computation, we distinguish the two possible cases.
		
		Suppose $\max\{n-r,r-2\}=n-r$. By formula (\ref{eq:regJGc}) and Theorem \ref{Thm:regGbounds},
		\begin{equation}\label{eq:laterused}
			\begin{aligned}
		r\ &=\ \sum_{i=1}^c\reg(J_{G_i})-(c-1)\ge\sum_{i=1}^c2-(c-1)\\&=\ 2c-c+1=c+1.
		\end{aligned}
		\end{equation}
		Therefore, $c\le r-1$. Now, by formula (\ref{eq:pdJGc}) and Theorem \ref{Thm:pdGconnect},
		\begin{equation}\label{eq:ProjDimCconnectedG}
		\begin{aligned}
		\pd(J_G)\ &=\ \sum_{i=1}^c\pd(J_{G_i})+(c-1)\\&\ge\ \sum_{i=1}^c(n_i-2)+(c-1)\\&=\ \sum_{i=1}^cn_i-2c+c-1\\&=\ n-c-1\ge n-r,
		\end{aligned}
		\end{equation}
		since $c\le r-1$.
		
		Suppose now $\max\{n-r,r-2\}=r-2$. By formula (\ref{eq:regJGc}) and Theorem \ref{Thm:regGbounds},
		\begin{align*}
		r\ &=\ \sum_{i=1}^c\reg(J_{G_i})-(c-1)\le\sum_{i=1}^cn_i-(c-1)\\&=\ n-c+1.
		\end{align*}
		Thus, $c\le n-r+1$. Using the computation in (\ref{eq:ProjDimCconnectedG}), we obtain
		\begin{align*}
		\pd(J_G)\ &\ge\ n-c-1\ge r-2,
		\end{align*}
		since $c\le n-r+1$.
	\end{proof}
	
	\section{Special classes of graphs}\label{FS22:sec2}
	
	In this section, we determine some classes of graphs that have a given projective dimension or a given regularity. These families will be used to get our main result.
	
	Let $G_1,G_2$ be two graphs with disjoint vertex sets $V_1$ and $V_2$ and edge sets $E_1$ and $E_2$. The \textit{join} of $G_1$ and $G_2$ is defined to be the graph $G_1*G_2$ with vertex set $V_1\cup V_2$ and edge set $E_1\cup E_2\cup\{\{v_1,v_2\}:v_1\in V_1,v_2\in V_2\}$.
	
	The following formula due to Madani and Kiani (\cite[Theorem 2.1]{MK18}) plays a pivotal role in our article. Suppose $G_1$ and $G_2$ are graphs with disjoint vertex sets $V_1$ and $V_2$, and that not both of them are complete. Then,
	\begin{equation}\label{eq:RegJoinGraphs}
	\reg(J_{G_1*G_2})=\max\{\reg(J_{G_1}),\reg(J_{G_2}),3\}.
	\end{equation}
	
	It is worth mentioning that, even if $G_1$ and $G_2$ may have isolated vertices, all vertices in $G_1*G_2$ are non-isolated, and $G_1*G_2$ is always a connected graph. Moreover, if $G_1\ne K_1$ and $G_2=K_1$, then $G_1*G_2$ is called a \textit{cone}.
	
	\subsection{Graphs with projective dimension $2n-5$}
	
	In \cite[Theorem 5.3]{MMK21a}, Malayeri, Madani and Kiani have characterized all graphs $G$ that have minimal depth possible, \emph{i.e.}, $\depth(S/J_G)=4$. For such graphs, by the Auslander--Buchsbaum formula we have maximal projective dimension:
	$$\pd(J_G)=\pd(S/J_G)-1=2n-\depth(S/J_G)-1=2n-5.$$ 
	
	Hereafter, for a positive integer $m\ge1$, we denote by $mK_1$ a graph consisting of $m$ isolated vertices.
	
	\begin{Theorem}\label{Thm:pd=2n-5}
		Let $G$ be a graph on $n\ge5$ non-isolated vertices. Then, the following conditions are equivalent.
		\begin{enumerate}
			\item[\textup{(i)}] $\pd(J_G)=2n-5$.
			\item[\textup{(ii)}] $G=\widetilde{G}*2K_1$ for some graph $\widetilde{G}$ on $n-2$ vertices.
		\end{enumerate}
	\end{Theorem}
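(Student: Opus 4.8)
The plan is to reduce the statement to the classification of binomial edge ideals of minimal depth. First, I would note that condition (i) is equivalent to $\depth(S/J_G)=4$. Indeed, $\depth(S/J_G)\ge 4$ holds for the graphs under consideration by \cite[Theorem 5.2]{MMK21a}, and the Auslander--Buchsbaum formula gives $\pd(J_G)=2n-\depth(S/J_G)-1$, exactly as in the proof of Theorem \ref{Thm:pdGconnect}. Hence $\pd(J_G)=2n-5$ if and only if the depth attains its minimum value $4$, and the task becomes to recognize the graphs of minimal depth among those in $\textup{Graphs}(n)$.

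Second, I would record an intrinsic reformulation of (ii). By definition of the join, $G=\widetilde G*2K_1$ holds if and only if $G$ has two distinct non-adjacent vertices $u,v$ each adjacent to all of the remaining $n-2$ vertices; equivalently $\{u,v\}\notin E(G)$ and $\deg_G(u)=\deg_G(v)=n-2$, in which case $\widetilde G=G_{V(G)\setminus\{u,v\}}$. Both implications are immediate: if $\deg_G(u)=n-2$ and $u$ is not adjacent to $v$, then the unique non-neighbour of $u$ is $v$, so $u$ is joined to every vertex of $G_{V(G)\setminus\{u,v\}}$, and symmetrically for $v$. It is against this reformulation that the classification \cite[Theorem 5.3]{MMK21a} of the graphs with $\depth(S/J_G)=4$ must be matched, so the core of the argument is to check that the hypothesis of that classification coincides with the condition just stated.

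Third, I would assemble the equivalence. For (ii)$\Rightarrow$(i), a graph of the form $\widetilde G*2K_1$ satisfies the hypothesis of \cite[Theorem 5.3]{MMK21a}, whence $\depth(S/J_G)=4$ and $\pd(J_G)=2n-5$ by the first step. For (i)$\Rightarrow$(ii), suppose $\pd(J_G)=2n-5$, so $\depth(S/J_G)=4$. I would first observe that $G$ must be connected: if $G=G_1\sqcup\cdots\sqcup G_c$ with $c\ge2$, then by Remark \ref{Rem:Gdecomp} and the fact that $\depth(S_i/J_{G_i})\ge3$ for each component (the single-edge case being a hypersurface of depth $3$, the larger ones having depth $\ge4$ by \cite[Theorem 5.2]{MMK21a}) we get $\depth(S/J_G)=\sum_{i=1}^c\depth(S_i/J_{G_i})\ge 3c\ge6>4$, a contradiction. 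Thus $G$ is connected, \cite[Theorem 5.3]{MMK21a} applies, and it forces the structure of the second step, which is exactly $G=\widetilde G*2K_1$.

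The only genuine content lies in the second step: transcribing the combinatorial hypothesis of \cite[Theorem 5.3]{MMK21a} into the clean join description, and verifying that it amounts precisely to the existence of two non-adjacent vertices of degree $n-2$. I would also keep track of the hypothesis $n\ge5$, which guarantees that the two apex vertices have degree $n-2\ge3$, so that $G$ indeed lies in $\textup{Graphs}(n)$ and the cited classification applies with no degenerate low-vertex cases.
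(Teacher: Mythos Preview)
Your proposal is correct and follows exactly the route the paper takes: the paper states this theorem without proof, treating it as a direct reformulation of \cite[Theorem~5.3]{MMK21a} via the Auslander--Buchsbaum identity $\pd(J_G)=2n-\depth(S/J_G)-1$ already used in Theorem~\ref{Thm:pdGconnect}. Your additional work---the intrinsic description of $\widetilde G*2K_1$ via two non-adjacent vertices of degree $n-2$, and the connectedness check through $\depth(S/J_G)=\sum_i\depth(S_i/J_{G_i})\ge 3c$---simply makes explicit the dictionary between the cited classification and the join formulation, which the paper leaves to the reader.
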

	
	\begin{Corollary}\label{Cor:pd=2n-5}
		Let $G$ be a graph on $n\ge5$ non-isolated vertices with $\pd(J_G)=2n-5$. Then,
		$$
		3\le\reg(J_G)\le n-2.
		$$
		Furthermore, for any integers $n\ge 5$ and $r\in\{3,\dots,n-2\}$, there exists a graph $G$ on $n$ non-isolated vertices such that $\pd(J_G)=2n-5$ and $\reg(J_G)=r$.
	\end{Corollary}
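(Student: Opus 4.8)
The plan is to lean entirely on the structure theorem just stated, Theorem \ref{Thm:pd=2n-5}, together with the Madani--Kiani join formula (\ref{eq:RegJoinGraphs}). The hypothesis $\pd(J_G)=2n-5$ forces, by Theorem \ref{Thm:pd=2n-5}, a decomposition $G=\widetilde G*2K_1$ with $\widetilde G$ a graph on $n-2$ vertices. Since $2K_1$ is edgeless and hence \emph{not} complete, formula (\ref{eq:RegJoinGraphs}) applies to every such $G$ and gives
\[
\reg(J_G)=\max\{\reg(J_{\widetilde G}),\,\reg(J_{2K_1}),\,3\}=\max\{\reg(J_{\widetilde G}),\,3\},
\]
the last equality because the edgeless graph $2K_1$ contributes a regularity that does not exceed $3$. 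This single identity is the engine for both halves of the corollary.

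To obtain the bounds, I would read the lower bound $\reg(J_G)\ge 3$ directly off the constant $3$ appearing in the maximum. For the upper bound, I would bound $\reg(J_{\widetilde G})$ by the vertex count of $\widetilde G$: by \cite[Theorem 1.1]{MM13} (recalled in Theorem \ref{Thm:regGbounds}) one has $\reg(J_{\widetilde G})\le |V(\widetilde G)|=n-2$, so $\reg(J_G)\le\max\{n-2,3\}=n-2$, where the hypothesis $n\ge 5$ is exactly what makes $n-2\ge 3$. This yields $3\le\reg(J_G)\le n-2$.

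For the realizability statement I would produce, for each prescribed $r\in\{3,\dots,n-2\}$, an explicit $\widetilde G$ with $\reg(J_{\widetilde G})=r$ and then take $G=\widetilde G*2K_1$. The natural choice is $\widetilde G=P_r\sqcup(n-2-r)K_1$, a path on $r$ vertices padded by $n-2-r$ isolated vertices; this is a graph on exactly $n-2$ vertices precisely in the range $3\le r\le n-2$. By Theorem \ref{Thm:regGbounds}(ii) we have $\reg(J_{P_r})=r$, and since the isolated vertices of $\widetilde G$ contribute no generators to $J_{\widetilde G}$, we get $\reg(J_{\widetilde G})=r$. The join formula then gives $\reg(J_G)=\max\{r,3\}=r$, Theorem \ref{Thm:pd=2n-5} guarantees $\pd(J_G)=2n-5$, and every vertex of $G$ is non-isolated because joining with $2K_1$ links each vertex of $\widetilde G$ to the two new vertices.

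I do not anticipate a serious obstacle here, since once Theorem \ref{Thm:pd=2n-5} and (\ref{eq:RegJoinGraphs}) are invoked the computations are routine. The only points requiring a little care are verifying that $2K_1$ is genuinely non-complete, so that (\ref{eq:RegJoinGraphs}) is the applicable case rather than the complete--complete exception, and observing that padding $P_r$ with isolated vertices adjusts the ambient vertex count to $n-2$ without altering the regularity. It is this last observation that allows one uniform family $P_r\sqcup(n-2-r)K_1$ to sweep out every intermediate value of $r$.
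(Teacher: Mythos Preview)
Your proposal is correct and matches the paper's own proof essentially line for line: the paper likewise invokes Theorem~\ref{Thm:pd=2n-5} to write $G=\widetilde G*2K_1$, applies formula~(\ref{eq:RegJoinGraphs}) to obtain $\reg(J_G)=\max\{\reg(J_{\widetilde G}),3\}$, bounds $\reg(J_{\widetilde G})\le n-2$ via Theorem~\ref{Thm:regGbounds}, and realizes each $r$ by taking $\widetilde G=P_r\sqcup(n-2-r)K_1$.
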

	\begin{proof}
		By the previous theorem, $G=\widetilde{G}*2K_1$ for some graph $\widetilde{G}$ on $n-2$ vertices. Since $2K_1$ is not complete and $J_{2K_1}=(0)$, by formula (\ref{eq:RegJoinGraphs}),
		$$
		\reg(J_G)=\max\{\reg(J_{\widetilde{G}}),\reg(J_{2K_1}),3\}=\max\{\reg(J_{\widetilde{G}}),3\}.
		$$
		Hence $\reg(J_G)\ge3$. Moreover, by Theorem \ref{Thm:regGbounds}, $\reg(J_{\widetilde{G}})\le|V(\widetilde{G})|=n-2$ and so $\reg(J_G)\le n-2$, since $n-2\ge3$.
		
		Now, let $r\in\{3,\dots,n-2\}$. Set $\widetilde{G}=P_r\sqcup(n-2-r)K_1$ and $G=\widetilde{G}*2K_1$. Then $\pd(J_G)=2n-5$, by the previous theorem. Moreover,
		$$
		\reg(J_G)=\max\{\reg(J_{\widetilde{G}}),\reg(J_{2K_1}),3\}=\max\{r,3\}=r,
		$$
		since $J_{\widetilde{G}}=J_{P_r}$ has regularity $r$ by Theorem \ref{Thm:regGbounds}(ii).
	\end{proof}
	
	\subsection{Graphs with projective dimension $2n-6$}
	After the case of minimal depth, Malayeri, Madani and Kiani classified in \cite[Theorem 5, Section 5]{MMK22} the graphs $G$ with $\depth(S/J_G)=5$, that is $\pd(J_G)=2n-6$.
	
	To state their result, we introduce the following class of graphs. Hereafter, if $n$ is a positive integer, we denote by $[n]$ the set $\{1,2,\dots,n\}$. If $v$ is a vertex of a graph $G$, $N_G(v)=\{u\in V(G)\setminus\{v\}:\{u,v\}\in E(G)\}$ is the \textit{neighbourhood of $v$ in $G$}.
	\begin{Definition}\label{Def:defG_T}
		\rm Let $T\subset[n]$ with $|T|=n-2$. The family $\mathcal{G}_T$ consists of all graphs $G$ with vertex set $[n]$ such that there exist two non-adjacent vertices $u$ and $v$ of $G$ with $u,v\in[n]\setminus T$, and three disjoint subsets of $T$, say $V_0$, $V_1$ and $V_2$ with $V_1,V_2\ne\emptyset$ and $V_0\cup V_1\cup V_2=T$, such that the following conditions hold:
		\begin{enumerate}
			\item[\textup{(a)}] $N_G(u)=V_0\cup V_1$ and $N_G(v)=V_0\cup V_2$
			\item[\textup{(b)}] $\{v_1,v_2\}\in E(G)$, for every $v_1\in V_1$ and $v_2\in V_2$.
		\end{enumerate}
	\end{Definition}

	Now, we introduce the class of $D_5$-type graphs \cite[Definition 4]{MMK22}.
	\begin{Definition}
		\rm Let $G$ be a graph on $n$ vertices such that $G\ne H*2K_1$, for all graphs $H$. The graph $G$ is a \textit{$D_5$-type graph} if one of the following conditions holds:
		\begin{enumerate}
			\item[\textup{(a)}] $G=\widetilde{G}*3K_1$, for some graph $\widetilde{G}$.
			\item[\textup{(b)}] $G=\widetilde{G}*(K_1\sqcup K_2)$, for some graph $\widetilde{G}$.
			\item[\textup{(c)}] $G\in\mathcal{G}_T$, for some $T\subset V(G)$ with $|T|=n-2$.
		\end{enumerate}
	\end{Definition}
	
	\begin{Theorem}\label{Thm:pd=2n-6}
		\textup{(\cite[Theorem 5]{MMK22}).} Let $G$ be a graph on $n\ge5$ non-isolated vertices. Then, the following conditions are equivalent.
		\begin{enumerate}
			\item[\textup{(i)}] $\pd(J_G)=2n-6$.
			\item[\textup{(ii)}] $G$ is a $D_5$-type graph.
		\end{enumerate}
	\end{Theorem}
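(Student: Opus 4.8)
This is Theorem~5 of \cite{MMK22}, and I outline the strategy behind it. The plan is to pass to depth and then refine the minimal-depth classification one step further.

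First I would invoke the Auslander--Buchsbaum formula, $\pd(J_G)=2n-\depth(S/J_G)-1$, which turns the condition $\pd(J_G)=2n-6$ into the equivalent statement $\depth(S/J_G)=5$. Since every binomial edge ideal satisfies $\depth(S/J_G)\ge4$ \cite{MMK21a}, with equality exactly when $G=\widetilde{G}*2K_1$ (Theorem~\ref{Thm:pd=2n-5}), the problem becomes: describe the graphs whose depth sits exactly one unit above the global minimum, while excluding the already classified family $\widetilde{G}*2K_1$. This exclusion is precisely what is built into the definition of a $D_5$-type graph.

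For the easy implication (ii) $\Rightarrow$ (i), I would compute $\depth(S/J_G)$ directly on each of the three families. For the join graphs $\widetilde{G}*3K_1$ and $\widetilde{G}*(K_1\sqcup K_2)$ the key point is that joining a fixed small graph to an arbitrary $\widetilde{G}$ pins the depth to a value independent of $\widetilde{G}$, and a short computation, parallel to the one giving depth $4$ for the $2K_1$ join, yields $\depth(S/J_G)=5$. For the family $\mathcal{G}_T$ I would exploit the neighbourhood conditions (a) and (b) of Definition~\ref{Def:defG_T}: deleting one of the two non-adjacent vertices $u,v$ and comparing $J_G$ with $J_{G\setminus u}$ and with the ideal of the graph obtained by completing $N_G(u)$ to a clique gives a short exact sequence from which $\depth(S/J_G)=5$ can be read off by the depth lemma.

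The substantial direction is (i) $\Rightarrow$ (ii), and I would argue by contraposition: assuming $G$ is neither of the form $\widetilde{G}*2K_1$ nor a $D_5$-type graph, I would produce the bound $\depth(S/J_G)\ge6$. The main technical ingredient is a sharp lower bound for $\depth(S/J_G)$ governed by the local structure of $G$ at its pairs of non-adjacent vertices: the depth can descend to $5$ only when some pair $u,v$ of non-adjacent vertices has neighbourhoods splitting as in $\mathcal{G}_T$, or when $G$ carries a free join factor $3K_1$ or $K_1\sqcup K_2$. I expect this to be the main obstacle, because certifying that no other configuration achieves depth $5$ demands a complete case analysis of how the vertex-deletion short exact sequences interact, and pruning all the remaining cases is the delicate part of the argument.
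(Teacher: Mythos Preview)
The paper does not give its own proof of this theorem: it is stated purely as a citation of \cite[Theorem 5]{MMK22}, with no argument supplied. You correctly recognise this and explicitly frame your proposal as an outline of the strategy from the cited reference rather than as an independent proof. In that sense your proposal is appropriate and matches the paper's treatment, which is simply to quote the result; your sketch of the depth-based approach (translating to $\depth(S/J_G)=5$ via Auslander--Buchsbaum, verifying each $D_5$-type family directly, and handling the converse by excluding all non-$D_5$ configurations) is a faithful high-level description of the method in \cite{MMK22}, and there is nothing further in the present paper to compare against.
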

	
	The following observation will be useful later.
	\begin{Remark}\label{Rem:pd=2n-5,2n-6,Connected}
		\rm Let $n\ge5$. By Theorems \ref{Thm:pd=2n-5} and \ref{Thm:pd=2n-6}, it follows that all graphs $G$ on $n\ge5$ non-isolated vertices such that $\pd(J_G)=2n-5$ or $\pd(J_G)=2n-6$ are connected.
	\end{Remark}
	
	For the proof of the next result, we need a lemma of Ohtani \cite[Lemma 4.8]{O11}, see also \cite[Lemma 3.1]{KS20} and formula (1) in the same article.
	
	We recall that a \textit{clique} of a graph $G$ is a subset $W$ of $V(G)$ such that $G_W$ is an induced complete subgraph of $G$. A \textit{maximal clique} of $G$ is a clique of $G$ that is not contained in any other clique of $G$. A vertex $v\in V(G)$ is called \textit{simplicial} if $N_G(v)$ is a clique of $G$, otherwise is called \textit{internal}. Let $v\in V(G)$. We denote $G_{V(G)\setminus\{v\}}$ by $G\setminus v$. Whereas, by $G_v$ we denote the graph with vertex set $V(G)$ and edge set $E(G)\cup\{\{w_1,w_2\}:w_1,w_2\in N_G(v)\}$.
	
	Let $v$ be an internal vertex of a graph $G$. Then Ohtani lemma, see also formula (1) in \cite{KS20}, implies that the following short sequence is exact:
	\begin{equation}\label{eq:ShortExactSeqOhtani}
	0\rightarrow\frac{S}{J_G}\longrightarrow\frac{S}{(x_v,y_v,J_{G\setminus v})}\oplus \frac{S}{J_{G_v}}\longrightarrow\frac{S}{(x_v,y_v,J_{G_v\setminus v})}\rightarrow0.
	\end{equation}
	
	In the next proposition we also use freely the following upper bound for the regularity proved in \cite[Theorem 2.1]{ERT20}, see also formula (3) in the same article. For a connected graph $G$ on $n$ vertices,
	$$
	\reg(J_G)\le n+2-|W|,\ \ \ \text{for any clique}\ W\ \text{of}\ G.
	$$
	
	\begin{Proposition}\label{Prop:pd=2n-6}
		Let $G$ be a graph on $n\ge6$ non-isolated vertices such that $\pd(J_G)=2n-6$. Then,
		$$
		3\le\reg(J_G)\le n-2.
		$$
	\end{Proposition}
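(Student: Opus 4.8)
The plan is to feed $\pd(J_G)=2n-6$ into the classification of Theorem \ref{Thm:pd=2n-6}: this forces $G$ to be a $D_5$-type graph, and by Remark \ref{Rem:pd=2n-5,2n-6,Connected} such a $G$ is connected. The lower bound is then immediate. Indeed $\pd(J_{K_n})=n-2<2n-6$ for $n\ge6$, so $G\ne K_n$, and since $G$ is connected on $n$ non-isolated vertices, Theorem \ref{Thm:regGbounds}(i) yields $\reg(J_G)\ge3$. Everything therefore reduces to proving the upper bound $\reg(J_G)\le n-2$, which I would do by running through the three possibilities in the definition of a $D_5$-type graph.

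For the two "join" cases I would invoke the Madani--Kiani formula (\ref{eq:RegJoinGraphs}). In case (a), $G=\widetilde{G}*3K_1$ with $\widetilde{G}$ on $n-3$ vertices; as $3K_1$ is not complete, formula (\ref{eq:RegJoinGraphs}) gives $\reg(J_G)=\max\{\reg(J_{\widetilde{G}}),3\}$, and the bound $\reg(J_{\widetilde{G}})\le|V(\widetilde{G})|=n-3$ from \cite{MM13} forces $\reg(J_G)\le\max\{n-3,3\}=n-3\le n-2$ for $n\ge6$. Case (b), $G=\widetilde{G}*(K_1\sqcup K_2)$, is handled identically once one observes $\reg(J_{K_1\sqcup K_2})=\reg(J_{K_2})=2$, so again $\reg(J_G)=\max\{\reg(J_{\widetilde{G}}),3\}\le n-3$.

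The remaining case (c), $G\in\mathcal{G}_T$, is the heart of the matter and the step I expect to be the main obstacle, because the clique bound $\reg(J_G)\le n+2-|W|$ of \cite{ERT20} is powerless here: these graphs can be essentially bipartite, carrying no clique of size larger than $2$. Adopting the notation of Definition \ref{Def:defG_T}, with non-adjacent vertices $a,b$ and $T=V_0\sqcup V_1\sqcup V_2$, I would instead exploit Ohtani's exact sequence (\ref{eq:ShortExactSeqOhtani}). The decisive point is that every $w\in V_1$ is an internal vertex, since $a\in N_G(w)$ and $V_2\subseteq N_G(w)$ while $a$ is adjacent to no vertex of $V_2$; the situation for $w\in V_2$ is symmetric. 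The standard regularity estimate read off from (\ref{eq:ShortExactSeqOhtani}) is
$$
\reg(J_G)\ \le\ \max\{\reg(J_{G\setminus w}),\,\reg(J_{G_w}),\,\reg(J_{G_w\setminus w})+1\},
$$
and its virtue is that in $G_w$ the neighbourhood of $w$ is completed to a clique: for $w\in V_1$ this produces the clique $\{w,a\}\cup V_2$, and for $w\in V_2$ the clique $\{w,b\}\cup V_1$. Hence, as soon as $|V_1|\ge2$ or $|V_2|\ge2$, a suitable $w$ gives a clique of size $\ge4$, and the clique bound applied to $G_w$ and to $G_w\setminus w$ controls two of the three terms; the term $\reg(J_{G\setminus w})$ I would treat by induction on $n$, the inductive statement being exactly ``$\reg(J_H)\le m-2$ for every $H\in\mathcal{G}_{T'}$ on $m$ non-isolated vertices'', and checking that deleting $w$ from the larger of $V_1,V_2$ leaves $G\setminus w\in\mathcal{G}_{T\setminus\{w\}}$ with no new isolated vertices.

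The delicate residual configuration is $|V_1|=|V_2|=1$, where $V_0$ consists of $n-4$ vertices all adjacent to both $a$ and $b$. Here I would apply (\ref{eq:ShortExactSeqOhtani}) at $w=a$: in $G_a$ the set $\{a\}\cup V_0\cup V_1$ becomes a clique of size $n-2$, so the clique bound of \cite{ERT20} forces $\reg(J_{G_a})\le4$, and the analogous clique $V_0\cup V_1$ of size $n-3$ in $G_a\setminus a$ forces $\reg(J_{G_a\setminus a})\le4$. Balancing these against $n-2$ settles all $n\ge7$, while the term $\reg(J_{G\setminus a})$ and the tight small value $n=6$ (where $n-2=4$) are precisely where the argument is hardest; these I would dispose of by a direct structural analysis, backed by the exhaustive computation with \texttt{Macaulay2} and \texttt{NautyGraphs}. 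The real difficulty throughout case (c) is to organise this case analysis inside $\mathcal{G}_T$ so that the inductive contribution $\reg(J_{G\setminus w})$ never exceeds $n-2$.
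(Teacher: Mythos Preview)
Your treatment of the lower bound and of the join cases (a) and (b) matches the paper. The gap is entirely in case (c), where your plan diverges from the paper's argument and, as outlined, does not close.

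The problem is the vertex you pivot on. Picking $w\in V_1$ (say) produces in $G_w$ only the clique $\{w,a\}\cup V_2$ of size $|V_2|+2$; in $G_w\setminus w$ this shrinks to $\{a\}\cup V_2$. The clique bound then gives $\reg(J_{G_w\setminus w})\le (n-1)+2-(|V_2|+1)=n-|V_2|$, so the troublesome term $\reg(J_{G_w\setminus w})+1$ is only bounded by $n-2$ when $|V_2|\ge3$. Meanwhile, for your induction you need $G\setminus w\in\mathcal{G}_{T\setminus\{w\}}$, which forces $|V_1|\ge2$. These two requirements pull in opposite directions: to get a large clique you want the \emph{other} part $V_{3-i}$ large, but to keep the inductive structure you must delete from a part of size $\ge2$. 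The configurations $|V_1|=2,\ |V_2|=1$ (and symmetrically) are simply not covered, and even when both parts have size $2$ the ``$+1$'' term overshoots. Deferring $n=6$ and $\reg(J_{G\setminus a})$ to a computer check is not a proof.

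The paper's key move, which you are missing, is to apply Ohtani at the distinguished vertices $u$ and $v$ rather than at a vertex of $T$. Two things happen. First, $G\setminus v$ is never a path (a short combinatorial check: any $v_1\in V_1$ already has the two neighbours $u$ and some $v_2\in V_2$, and $|V_0\cup V_1|=n-3\ge3$ forces too many neighbours somewhere), so $\reg(J_{G\setminus v})\le n-2$ directly from Theorem~\ref{Thm:regGbounds}(ii)---no induction is needed. Second, in $G_v$ the set $N_G(v)=V_0\cup V_2$ is completed to a clique, and together with condition~(b) of Definition~\ref{Def:defG_T} this makes \emph{all of} $T=V_0\cup V_1\cup V_2$ a clique of size $n-2$ (immediately if $u$ is simplicial, and after one more application of Ohtani at $u$ otherwise). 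The clique bound then gives $\reg(J_{G_v})\le4$ and $\reg(J_{G_v\setminus v})+1\le4$, both $\le n-2$ for $n\ge6$. This is why the paper's case analysis is on whether $u$ and $v$ are simplicial or internal, not on the sizes of $V_1$ and $V_2$.
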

	\begin{proof}
		By the previous theorem, $G$ is a $D_5$-type graph. Therefore $G$ is not a complete graph. By Theorem \ref{Thm:regGbounds}(i), $\reg(J_G)\ge3$.
		
		Let us prove the upper bound. Firstly, suppose $G=\widetilde{G}*3K_1$ or $G=\widetilde{G}*(K_1\sqcup K_2)$, for some graph $\widetilde{G}$ on $n-3$ vertices. Then, arguing as in Corollary \ref{Cor:pd=2n-5} we obtain $\reg(J_G)\le n-3$ in this case.\smallskip
		
		Suppose now that $G\in\mathcal{G}_T$ for some $T\subset V(G)$ with $|T|=n-2$. Then $V(G)=\{u,v\}\cup V_0\cup V_1\cup V_2$ where the union is disjoint, $V_1,V_2\ne\emptyset$, $u$ and $v$ are non-adjacent, $N_G(u)=V_0\cup V_1$, $N_G(v)=V_0\cup V_2$ and $\{v_1,v_2\}\in E(G)$ for all $v_1\in V_1$ and $v_2\in V_2$.\smallskip
		
		Let us show that $\reg(J_G)\le n-2$. We distinguish three cases.\smallskip\\
		\textbf{Case 1.} Suppose both $u$ and $v$ are simplicial vertices of $G$. Then, $T=V_0\cup V_1\cup V_2$ is a maximal clique of $G$. Therefore, $\reg(J_G)\le n+2-|T|=4\le n-2$ as $n\ge6$.\smallskip
		
		For the proof of the next two cases, we note that $G\setminus v$ is not a path. Assume for a contradiction, $G\setminus v$ is a path. Then every vertex of $G\setminus v$ is adjacent to at most two vertices of $G\setminus v$. Let $v_1\in V_1$ and $v_2\in V_2$. Since $v_1$ is adjacent to $u$ and $v_2$, it follows that $V_2=\{v_2\}$ as otherwise $v_1$ would have more than two neighbors in $G\setminus v$. Then $|V_0\cup V_1|=n-3\ge 3$ which implies $u$ has at least three neighbors in $G\setminus v$, which is a contradiction.\smallskip\\
		\textbf{Case 2.} Suppose that $u$ is a simplicial vertex, but $v$ is internal. Then $\{u\}\cup V_0\cup V_1$ is a clique of $G$. Since $v$ is internal, we can apply Ohtani lemma. By the short exact sequence (\ref{eq:ShortExactSeqOhtani}) we have
		$$
		\reg(J_G)\le\max\{\reg(x_v,y_v,J_{G\setminus v}),\reg(J_{G_v}),\reg(x_v,y_v,J_{G_v\setminus v})+1\}.
		$$
		Since $x_v,y_v$ do not divide any generator of $J_{G\setminus v}$ and $J_{G_v\setminus v}$, we obtain
		\begin{equation}\label{eq:bound|V_0|1}
		\reg(J_G)\le\max\{\reg(J_{G\setminus v}),\reg(J_{G_v}),\reg(J_{G_v\setminus v})+1\}.
		\end{equation}
	    Since $G\setminus v$ is not a path, by Theorem \ref{Thm:regGbounds}(ii) we have
		\begin{equation}\label{eq:bound>|V_0|2}
		\reg(J_{G\setminus v})\ \le\ |V(G\setminus v)|-1=n-2.
		\end{equation}
		Note that in the graphs $G_v$ and $G_v\setminus v$, the set $T=V_0\cup V_1\cup V_2$ is a clique of size $n-2$. Therefore, since $n\ge6$ we have
		\begin{align}
		\label{eq:bound|V_0|3}\reg(J_{G_v})\ &\le\ n+2-|T|=4\le n-2,\\
		\label{eq:bound|V_0|4}\reg(J_{G_v\setminus v})+1\ &\le\ (n-1)+2-|T|+1=4\le n-2.
		\end{align}
		Combining (\ref{eq:bound|V_0|1}) with (\ref{eq:bound>|V_0|2}), (\ref{eq:bound|V_0|3}) and (\ref{eq:bound|V_0|4}) we obtain $\reg(J_G)\le n-2$, as desired.\medskip\\
		\textbf{Case 3.} Suppose that both $u$ and $v$ are internal vertices. We first apply Ohtani lemma to $v$ and get the inequality,
		\begin{equation}\label{eq:bound|V_0|1p1}
		\reg(J_G)\le\max\{\reg(J_{G\setminus v}),\reg(J_{G_v}),\reg(J_{G_v\setminus v})+1\}.
		\end{equation}
		Since $G\setminus v$ is not a path, by Theorem \ref{Thm:regGbounds}(ii) we have
		\begin{equation}\label{eq:bound|V_0|1p6}
		\reg(J_{G\setminus v})\le n-2.
		\end{equation}
		
		Since $u$ and $v$ are non-adjacent in $G$ and in $G_v$, $u$ is internal (simplicial) in $G_v$ if and only if $u$ is internal (simplicial) in $G_v\setminus v$. We distinguish the two cases.\smallskip\\
		\textbf{Subcase 3.1.} Suppose $u$ is simplicial in $G_v$ and $G_v\setminus v$. Then $T=V_0\cup V_1\cup V_2$ is a clique of size $n-2$ in both graphs. Therefore,
		\begin{align}
		\label{eq:bound|V_0|1p7}\reg(J_{G_v})\ &\le\ n+2-|T|=4\le n-2,\\
		\label{eq:bound|V_0|1p8}\reg(J_{G_v\setminus v})+1\ &\le\ (n-1)+2-|T|+1=4\le n-2.
		\end{align}
		Combining (\ref{eq:bound|V_0|1p1}) with (\ref{eq:bound|V_0|1p6}), (\ref{eq:bound|V_0|1p7}) and (\ref{eq:bound|V_0|1p8}), we obtain $\reg(J_{G})\le n-2$, as desired.\smallskip\\
		\textbf{Subcase 3.2.} Suppose $u$ is internal in $G_v$ and $G_v\setminus v$. We apply Ohtani lemma to get
		\begin{align}
		\label{eq:bound|V_0|1p9}\reg(J_{G_v})\ &\le\ \max\{\reg(J_{G_v\setminus u}),\reg(J_{(G_v)_u}),\reg(J_{(G_v)_u\setminus u})+1\},\\
		\label{eq:bound|V_0|1p10}\reg(J_{G_v\setminus v})\ &\le\ \max\{\reg(J_{G_v\setminus\{u,v\}}),\reg(J_{(G_v\setminus v)_u}),\reg(J_{(G_v\setminus v)_u\setminus u})+1\}.
		\end{align}
		The graph $G_v\setminus u$ is easily seen to be not a path. Hence, $\reg(J_{G_v\setminus u})\le|V(G_v\setminus u)|-1=n-2$. In $(G_v)_u$ the set $T$ is a clique of size $n-2$, and the same calculation as in (\ref{eq:bound|V_0|1p7}) gives $\reg(J_{(G_v)_u})\le n-2$. Finally, in $(G_v)_u\setminus u$, $T$ is a clique. The same calculation as in (\ref{eq:bound|V_0|1p8}) gives $\reg(J_{(G_v)_u\setminus u})+1\le n-2$. These calculations and equation (\ref{eq:bound|V_0|1p9}) yield
		\begin{equation}\label{eq:bound|V_0|1p11}
		\reg(J_{G_v})\le n-2.
		\end{equation}
		Note that $G_v\setminus\{u,v\}$ is not a path. Indeed, $|T|=|V_0\cup V_1\cup V_2|\ge4$ and in $G_v\setminus\{u,v\}$ all vertices of $V_0\cup V_1$ are adjacent to all vertices of $V_2$, thus $G_v\setminus\{u,v\}$ contains a triangle. Hence $\reg(J_{G_v\setminus\{u,v\}})\le|V(G_v\setminus\{u,v\})|-1=n-3$. Note that in $(G_v\setminus v)_u$, $T$ is a clique. As before, we have $\reg(J_{(G_v\setminus v)_u})\le(n-1)+2-|T|=3\le n-3$. Moreover, $(G_v\setminus v)_u\setminus u$ is a complete graph. Consequently, by Theorem \ref{Thm:regGbounds}(i) we have $\reg(J_{(G_v\setminus v)_u\setminus u})+1=3\le n-3$. Thus,
		\begin{equation}\label{eq:bound|V_0|1p12}
		\reg(J_{G_v\setminus v})\le n-3.
		\end{equation}
		Finally, combining (\ref{eq:bound|V_0|1p1}) with (\ref{eq:bound|V_0|1p6}), (\ref{eq:bound|V_0|1p11}) and (\ref{eq:bound|V_0|1p12}) we obtain that $\reg(J_G)\le n-2$, as desired. The proof is complete.
	\end{proof}
	\begin{Corollary}\label{Cor:pd=2n-6}
		Let $n\ge5$ and $3\le r\le n-2$ be positive integers. Then, there exists a graph $G$ on $n$ non-isolated vertices such that
		$$\pd(J_G)=2n-6\ \ \ \text{and}\ \ \ \reg(J_G)=r.$$
	\end{Corollary}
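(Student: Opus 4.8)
The plan is to invoke Theorem~\ref{Thm:pd=2n-6}, which reduces the Corollary to exhibiting, for each $r\in\{3,\dots,n-2\}$, a $D_5$-type graph on $n$ non-isolated vertices whose binomial edge ideal has regularity exactly $r$; such a graph then has $\pd(J_G)=2n-6$ automatically. Recall that being $D_5$-type means $G\ne H*2K_1$ together with one of the structural conditions (a)--(c). The two tools I would use are the join formula \eqref{eq:RegJoinGraphs} for the regularity and the class $\mathcal{G}_T$ of Definition~\ref{Def:defG_T}. It is natural to split the range of $r$ into the values $3\le r\le n-3$, realized through joins with $3K_1$ (condition (a)), and the top value $r=n-2$, for which such joins are provably insufficient and the graphs of type $\mathcal{G}_T$ (condition (c)) become indispensable.

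For $3\le r\le n-3$ I would take $G=\widetilde{G}*3K_1$, where
$$
\widetilde{G}=\begin{cases}K_{n-3},&\text{if }r=3,\\ P_r\sqcup(n-3-r)K_1,&\text{if }4\le r\le n-3.\end{cases}
$$
In either case $\widetilde{G}$ has $n-3$ vertices, so $G$ has $n$ non-isolated vertices, and since $3K_1$ is not complete, \eqref{eq:RegJoinGraphs} gives $\reg(J_G)=\max\{\reg(J_{\widetilde{G}}),3\}$. As $\reg(J_{K_{n-3}})=2$ and $\reg(J_{P_r})=r\ge4$ by Theorem~\ref{Thm:regGbounds} (isolated vertices not affecting the regularity), this maximum equals $r$ in both cases. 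It then remains only to verify $G\ne H*2K_1$: an inspection of $\widetilde{G}*3K_1$ shows that $G=H*2K_1$ would force $\widetilde{G}$ to contain two non-adjacent vertices each adjacent to all the remaining vertices of $\widetilde{G}$, a pattern which is absent when $\widetilde{G}$ is complete, when $\widetilde{G}$ has an isolated vertex and $|V(\widetilde{G})|\ge3$, and for the path $P_{n-3}$ with $n-3\ge4$. Thus $G$ is $D_5$-type via (a) and $\pd(J_G)=2n-6$. The degenerate case $n=5$, whose only value is $r=3=n-2$, is subsumed here through $G=K_2*3K_1$.

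The genuinely delicate case is $r=n-2$ with $n\ge6$, and here I would produce a graph of type $\mathcal{G}_T$ carrying a long induced path. Concretely, on the vertex set $\{u,v\}\cup T$ with $T=\{1,\dots,n-2\}$, set $V_1=\{1\}$, $V_2=\{2\}$, $V_0=\{3,\dots,n-2\}$, and impose exactly the edges demanded by Definition~\ref{Def:defG_T}, namely $N_G(u)=V_0\cup V_1$, $N_G(v)=V_0\cup V_2$ and $\{1,2\}$, together with the path edges $\{2,3\},\{3,4\},\dots,\{n-3,n-2\}$ inside $T$. Then $G\in\mathcal{G}_T$, and the only edges of $G$ with both endpoints in $T$ are these, so the induced subgraph $G_T$ is precisely the path $1-2-\cdots-(n-2)$, that is $G_T=P_{n-2}$. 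Hence, by the monotonicity of regularity under induced subgraphs recalled in Section~\ref{FS22:sec1} and Theorem~\ref{Thm:regGbounds}(ii), $\reg(J_G)\ge\reg(J_{P_{n-2}})=n-2$. Once I check $G\ne H*2K_1$, the graph is $D_5$-type, so $\pd(J_G)=2n-6$ and Proposition~\ref{Prop:pd=2n-6} supplies $\reg(J_G)\le n-2$; combining the two bounds yields $\reg(J_G)=n-2$. The condition $G\ne H*2K_1$ would require two non-adjacent vertices of degree $n-2$, but the maximal degree of $G$ is $\max\{n-3,4\}$, which is $<n-2$ when $n\ge7$, while for $n=6$ the single vertex of degree $n-2=4$ has a non-neighbour of too small a degree to match, ruling out this form.

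The main obstacle is exactly this last case. The join constructions cannot reach regularity $n-2$, since a join with $3K_1$ or $K_1\sqcup K_2$ has regularity at most $\max\{n-3,3\}<n-2$, so one is forced into the class $\mathcal{G}_T$, where neither invariant is transparent. The crux is to engineer a member of $\mathcal{G}_T$ that simultaneously retains an induced $P_{n-2}$, forcing $\reg\ge n-2$ from below, and avoids the excluded form $H*2K_1$, so that Theorem~\ref{Thm:pd=2n-6} still applies and Proposition~\ref{Prop:pd=2n-6} then delivers the matching upper bound for free; verifying the latter exclusion is the one point where a careful, if elementary, degree analysis is unavoidable.
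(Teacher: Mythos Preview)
Your proof is correct and follows essentially the same strategy as the paper: realize $3\le r\le n-3$ via a join $\widetilde{G}*3K_1$ with $\widetilde{G}$ containing an induced $P_r$, and realize $r=n-2$ via a graph in $\mathcal{G}_T$ whose induced subgraph on $T$ is $P_{n-2}$, then combine the lower bound from the induced path with the upper bound from Proposition~\ref{Prop:pd=2n-6}. Your version is in fact slightly more careful than the paper's, since you explicitly verify the condition $G\ne H*2K_1$ needed to invoke Theorem~\ref{Thm:pd=2n-6} (the paper leaves this implicit), and your choice $\widetilde{G}=K_{n-3}$ for $r=3$ handles the case $n=5$ directly without the paper's forward reference to Corollary~\ref{Cor:reg=3}.
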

	\begin{proof}
		Let $3\le r\le n-3$, and set $\widetilde{G}=P_{r}\sqcup(n-r-3)K_1$ and $G=\widetilde{G}*3K_1$. Then, by Theorem \ref{Thm:pd=2n-6}, $\pd(J_G)=2n-6$, and by formula (\ref{eq:RegJoinGraphs}), $\reg(J_G)=\reg(J_{\widetilde{G}})=\reg(J_{P_r})=r$. For $n=5$, we have $r=3$ and we can apply Corollary \ref{Cor:reg=3}.
		
		Let now $n\ge6$ and $r=n-2$. Let $G$ be the graph on the vertex set $V(G)=\{u,v,1,\dots,n-2\}$ and with edge set $E(G)$ equal to
		$$
		\big\{\{i,i+1\},\{i,u\}:i=1,\dots,n-3\big\}\cup\big\{\{i,v\}:i=1,\dots,n-4\big\}\cup\{\{n-2,v\}\}.
		$$
		Then, setting $T=\{1,\dots,n-2\}$, we have that $G\in\mathcal{G}_T$. To see why this is true, using the same notation as in Definition \ref{Def:defG_T}, just take $V_0=\{1,\dots,n-4\}$, $V_1=\{n-3\}$ and $V_2=\{n-2\}$. Therefore, by Theorem \ref{Thm:pd=2n-6}, $\pd(J_G)=2n-6$. Note that the induced subgraph $P=G_{\{1,\dots,n-2\}}$ is a path on $n-2$ vertices. Hence, $\reg(J_G)\ge\reg(J_P)=n-2$. On the other hand, by Proposition \ref{Prop:pd=2n-6}, $\reg(J_G)\le n-2$. Consequently, $\reg(J_G)=n-2$ and $G$ is the graph we are looking for. 
	\end{proof}
	
	\subsection{Graphs with regularity 3} We quote the following result \cite[Theorem 3.2]{MK18}.
	\begin{Theorem}\label{Thm:reg=3}
		Let $G$ be a non-complete graph on $n$ non-isolated vertices. Then $\reg(J_G)=3$ if and only if one of the following conditions holds:
		\begin{enumerate}
			\item[\textup{(i)}] $G=K_r\sqcup K_s$ with $r,s\ge2$ and $r+s=n$, or
			\item[\textup{(ii)}] $G=G_1*G_2$, where $G_i$ is a graph on $n_i<n$ vertices such that $n_1+n_2=n$ and $\reg(J_{G_i})\le3$, for $i=1,2$.
		\end{enumerate}
	\end{Theorem}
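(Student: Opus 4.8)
The plan is to prove the two implications separately, with the crux of the forward direction being the elementary but decisive observation that regularity $3$ forbids induced paths on four vertices.

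For sufficiency I would argue by direct computation. If $G=K_r\sqcup K_s$ with $r,s\ge2$, then formula (\ref{eq:regJGc}) together with Theorem \ref{Thm:regGbounds}(i) gives $\reg(J_G)=\reg(J_{K_r})+\reg(J_{K_s})-1=2+2-1=3$. If instead $G=G_1*G_2$ with $\reg(J_{G_i})\le3$, then since $G$ is non-complete the two factors cannot both be complete, so the Madani--Kiani formula (\ref{eq:RegJoinGraphs}) applies and yields $\reg(J_G)=\max\{\reg(J_{G_1}),\reg(J_{G_2}),3\}=3$.

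For necessity, assume $G$ is non-complete on $n$ non-isolated vertices with $\reg(J_G)=3$. First I would observe that $G$ has no induced $P_4$: since $\reg(J_{P_4})=4$ by Theorem \ref{Thm:regGbounds}(ii) and regularity is monotone under passing to induced subgraphs, an induced $P_4$ would force $\reg(J_G)\ge4$. Hence $G$ is $P_4$-free, i.e.\ a cograph. I then split into cases. If $G$ is disconnected, write $G=G_1\sqcup\dots\sqcup G_c$ with $c\ge2$ components, each on at least two vertices since $G$ has no isolated vertices; then formula (\ref{eq:regJGc}) and $\reg(J_{G_i})\ge2$ give $3\ge 2c-(c-1)=c+1$, so $c=2$ and both summands equal $2$. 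Theorem \ref{Thm:regGbounds}(i) then identifies $G_1=K_r$ and $G_2=K_s$ with $r,s\ge2$, which is case (i). If $G$ is connected, then a $P_4$-free connected graph on at least two vertices is a nontrivial join (equivalently, its complement is disconnected), so $G=G_1*G_2$; as before the factors are not both complete, and formula (\ref{eq:RegJoinGraphs}) forces $\reg(J_{G_i})\le3$, which is case (ii).

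The step I expect to be the main obstacle is the structural deduction in the connected case, namely that $\reg(J_G)=3$ makes $G$ a join. All the weight rests on the $P_4$-free reduction, combined with the classical cograph dichotomy that a connected graph whose complement is also connected must contain an induced $P_4$. Granting this, the two regularity formulas---additivity under disjoint union and the maximum-formula under join---carry out the remaining bookkeeping and pin the regularity down to exactly the two stated configurations.
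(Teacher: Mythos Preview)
The paper does not prove this theorem; it is quoted without argument from \cite[Theorem 3.2]{MK18}. Your proposal is therefore a standalone proof rather than a reconstruction, and it is correct. The forward direction rests exactly where you put the weight: $\reg(J_G)=3$ forbids an induced $P_4$ by Theorem \ref{Thm:regGbounds}(ii) together with monotonicity on induced subgraphs, and then the classical cograph dichotomy (a connected $P_4$-free graph on at least two vertices has disconnected complement, hence is a nontrivial join) yields case (ii), while the additivity formula (\ref{eq:regJGc}) forces $c=2$ and both components complete in the disconnected case (i). Sufficiency is the direct computation you give via (\ref{eq:regJGc}) and (\ref{eq:RegJoinGraphs}). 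One minor point worth making explicit: in (ii) the factors $G_i$ may carry isolated vertices or even satisfy $J_{G_i}=(0)$; the join formula (\ref{eq:RegJoinGraphs}) still applies in that situation, as the paper remarks just after stating it, so neither implication is affected. I cannot compare with the original argument in \cite{MK18}, but your route is self-contained modulo Seinsche's theorem on cographs.
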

	
	\begin{Remark}\label{Rem:reg=3,GConnected}
		\rm If $G$ is a graph on $n\ge4$ non-isolated vertices with $\reg(J_G)=3$ and $\pd(J_G)\ge n-2$, then $G$ must be connected. Suppose by contradiction that there exists a disconnected graph $G=G_1\sqcup\dots\sqcup G_c$ with regularity $r=3$ and projective dimension $\pd(J_G)\ge n-2$. Since $r=3\le\lfloor\frac{n}{2}\rfloor+1$, using the calculation (\ref{eq:laterused}) we obtain $c\le r-1=2$. Thus $c=2$ and by formula (\ref{eq:regJGc}) we must have $\reg(J_{G_1})+\reg(J_{G_2})-1=3$. This formula holds if and only if $\reg(J_{G_1})=\reg(J_{G_2})=2$. Thus $G=K_r\sqcup K_s$ as in Theorem \ref{Thm:reg=3}(i). But then formula (\ref{eq:pdJGc}) and Theorem \ref{Thm:regGbounds}(i) yield $\pd(J_G)=n-3$, a contradiction.
	\end{Remark}
	
	For the proof of the next result, we need the following lemma which is an immediate consequence of \cite[Theorems 3.4 and 3.9]{KS20}.
	\begin{Lemma}\label{Lemma:KumarSarkar}
		Let $\widetilde{G}$ be a graph on $n-1$ vertices, and set $G=\widetilde{G}*K_1$. Then,
		$$
		\pd(J_G)=\begin{cases}
			\pd(J_{\widetilde{G}})+2,&\text{if}\ \widetilde{G}\ \text{is connected},\\
			\max\{\pd(J_{\widetilde{G}})+2,n-3\},&\text{if}\ \widetilde{G}\ \text{is disconnected}.
		\end{cases}
		$$
	\end{Lemma}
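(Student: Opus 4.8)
The plan is to obtain the formula by applying Ohtani's lemma to the apex of the cone and reading off the top nonvanishing homological degree; the delicate cancellation that this requires is exactly the content of \cite[Theorems 3.4 and 3.9]{KS20}, which I would invoke for that step. It is convenient to pass to $S/J_G$ and to use $\pd(S/J_H)=\pd(J_H)+1$ (Remark \ref{Rem:Gdecomp}), so the assertion becomes $\pd(S/J_G)=\pd(J_{\widetilde G})+3$ when $\widetilde G$ is connected, and $\pd(S/J_G)=\max\{\pd(J_{\widetilde G})+3,\,n-2\}$ when $\widetilde G$ is disconnected. First I would observe that we may assume $\widetilde G$ is not complete: if $\widetilde G=K_{n-1}$ then $G=K_n$, whose projective dimension $n-2$ is already recorded in the proof of Theorem \ref{Thm:pdGconnect}, so this degenerate case is set aside.

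Assuming $\widetilde G$ is not complete, the apex $w$ satisfies $N_G(w)=V(\widetilde G)$, which is not a clique, so $w$ is internal and Ohtani's lemma applies. Feeding $v=w$ into the sequence (\ref{eq:ShortExactSeqOhtani}) and noting $G\setminus w=\widetilde G$, $G_w=K_n$ and $G_w\setminus w=K_{n-1}$, I obtain
\begin{equation*}
0\rightarrow\frac{S}{J_G}\longrightarrow\frac{S}{(x_w,y_w,J_{\widetilde G})}\oplus\frac{S}{J_{K_n}}\longrightarrow\frac{S}{(x_w,y_w,J_{K_{n-1}})}\rightarrow0.
\end{equation*}
Since $x_w,y_w$ form a regular sequence on both $S/J_{\widetilde G}$ and $S/J_{K_{n-1}}$ (each is a polynomial extension in these two variables), the projective dimensions of the three known modules are $\pd(J_{\widetilde G})+3$, then $n-1$, and finally $n$, the last two coming from the Eagon--Northcott resolutions of $J_{K_n}$ and $J_{K_{n-1}}$.

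It remains to extract $\pd(S/J_G)$ from the associated long exact sequence in $\mathrm{Tor}$. The crude mapping-cone estimate gives only $\pd(S/J_G)\le\max\{\pd(J_{\widetilde G})+3,\,n-1\}$, whereas the sought answer replaces the summand $n-1$ by $n-2$. Improving the bound by this one step is the crux: one must show, via the connecting homomorphism, that the top syzygies of the two complete-graph terms $S/J_{K_n}$ and $S/(x_w,y_w,J_{K_{n-1}})$ cancel, so as to leave a residual contribution only in homological degree $n-2$. One then checks that this residual is absorbed by $\pd(J_{\widetilde G})+3$ when $\widetilde G$ is connected --- indeed then $\pd(J_{\widetilde G})\ge n-3$ by Theorem \ref{Thm:pdGconnect}, whence $\pd(J_{\widetilde G})+3\ge n>n-2$ --- while it persists as a genuine maximum when $\widetilde G$ is disconnected with enough components that $\pd(J_{\widetilde G})$ drops below $n-5$. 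This cancellation analysis is the main obstacle, and it is precisely what \cite[Theorems 3.4 and 3.9]{KS20} carry out; granting their conclusions, the two displayed identities for $\pd(S/J_G)$ follow, and subtracting $1$ yields the stated formula for $\pd(J_G)$.
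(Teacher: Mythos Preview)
Your proposal is correct, and it rests on the same citation as the paper; the difference is purely one of directness. The paper does not set up the Ohtani sequence at all: it simply quotes the depth formulas $\depth_S(S/J_G)=\depth_{\widetilde S}(\widetilde S/J_{\widetilde G})$ (connected case) and $\depth_S(S/J_G)=\min\{\depth_{\widetilde S}(\widetilde S/J_{\widetilde G}),\,n+2\}$ (disconnected case) from \cite[Theorems 3.4 and 3.9]{KS20} and applies Auslander--Buchsbaum on both sides to convert to projective dimension, arriving at the stated formula in three lines. Your Ohtani setup is not wrong --- the sequence you write down is precisely the one Kumar and Sarkar analyse, so your sketch is in effect an outline of \emph{their} proof --- but as a proof of the present lemma it is redundant scaffolding: you build up to the cancellation problem and then defer it to \cite{KS20}, whereas the paper bypasses the scaffolding entirely by quoting the finished depth statements and translating them. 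The only thing your route buys is some insight into \emph{why} the $n-3$ term appears, namely as the residual Eagon--Northcott contribution after the top syzygies of the complete-graph pieces cancel; the paper's proof gives no such intuition, but it is considerably shorter.
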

	\begin{proof}
		If $\widetilde{G}$ is connected, by \cite[Theorem 3.4]{KS20}, $\depth_S(S/J_G)=\depth_{\widetilde{S}}(\widetilde{S}/J_{\widetilde{G}})$, where $\widetilde{S}=K[x_v,y_v:v\in V(\widetilde{G})]$. Using the Auslander--Buchsbaum formula, we obtain $2n-\pd(J_G)=2(n-1)-\pd(J_{\widetilde{G}})$, and consequently, $$\pd(J_G)=\pd(J_{\widetilde{G}})+2.$$
		
		Suppose now that $\widetilde{G}$ is disconnected. By \cite[Theorem 3.9]{KS20},
		$$
		\depth_S(S/J_G)=\min\{\depth_{\widetilde{S}}(\widetilde{S}/J_{\widetilde{G}}),n+2\}.
		$$
		Using the Auslander--Buchsbaum formula we obtain
		$$
		2n-\pd(J_G)-1=\min\{2(n-1)-\pd(J_{\widetilde{G}})-1,n+2\}.
		$$
		Therefore,
		\begin{align*}
			\pd(J_G)\ &=\ 2n-1-\min\{2(n-1)-1-\pd(J_{\widetilde{G}}),n+2\}\\
			&=\ \max\{\pd(J_{\widetilde{G}})+2,n-3\},
		\end{align*}
		as desired.
	\end{proof}

	\begin{Corollary}\label{Cor:reg=3}
		Let $n\ge4$ be an integer. Then, for all $n-3\le p\le 2n-5$, there exists a graph $G$ on $n$ non-isolated vertices such that
		$$
		\pd(J_G)=p\ \ \ \text{and}\ \ \ \reg(J_G)=3.
		$$
	\end{Corollary}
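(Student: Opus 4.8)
The plan is to exhibit, for every integer $p$ in the range $n-3\le p\le 2n-5$, an explicit graph $G$ on $n$ non-isolated vertices with $\reg(J_G)=3$ and $\pd(J_G)=p$. All of these graphs will be built as \emph{iterated cones} over a small ``seed'' graph $H$, that is, graphs of the form $H*K_t$ (joining $H$ with a complete graph $K_t$, equivalently coning $t$ times with a single vertex). Regularity will be forced to stay at $3$ by formula (\ref{eq:RegJoinGraphs}): whenever the seed $H$ is non-complete with $\reg(J_H)\le 3$, the join $G=H*K_t$ satisfies $\reg(J_G)=\max\{\reg(J_H),\reg(J_{K_t}),3\}=3$. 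Projective dimension will be tracked through Lemma \ref{Lemma:KumarSarkar}, read along the iterated construction $(\cdots((H*K_1)*K_1)\cdots)$: each cone over a \emph{connected} graph raises $\pd$ by exactly $2$, while the very first cone over a \emph{disconnected} seed is governed by the $\max$-branch of that lemma. At each stage the seed will visibly remain non-complete (so (\ref{eq:RegJoinGraphs}) applies and returns $3$) and, after the first cone, connected (so the connected branch of Lemma \ref{Lemma:KumarSarkar} applies); these checks are immediate from the explicit descriptions.

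First I would cover the range $n-3\le p\le 2n-7$. For $k=p-(n-3)\in\{0,\dots,n-4\}$ take the seed $H=K_{n-k-2}\sqcup K_2$ (valid since $n-k-2\ge 2$) and set $G=H*K_k$. By formula (\ref{eq:regJGc}), $\reg(J_H)=2+2-1=3$, so $\reg(J_G)=3$; and by (\ref{eq:pdJGc}) together with $\pd(J_{K_m})=m-2$ we get $\pd(J_H)=(n-k)-3$. Applying the disconnected branch of Lemma \ref{Lemma:KumarSarkar} to the first cone gives $\max\{(n-k-3)+2,(n-k+1)-3\}=n-k-1$, and the remaining $k-1$ connected cones add $2(k-1)$, so $\pd(J_G)=n+k-3=p$. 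The case $k=0$ is just $G=K_{n-2}\sqcup K_2$, with $\pd(J_G)=n-3$.

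For the top value $p=2n-5$ I would use the connected seed $P_3$, which has $\reg(J_{P_3})=3$ and $\pd(J_{P_3})=1$ by Theorem \ref{Thm:regGbounds}(ii), and cone it $n-3$ times: each cone is over a connected non-complete graph, so $G=P_3*K_{n-3}$ has $\reg(J_G)=3$ and $\pd(J_G)=1+2(n-3)=2n-5$. The delicate value is $p=2n-6$, because coning the disconnected seeds above reaches only $2n-7$; obtaining $2n-6$ requires a seed whose projective dimension grows with its vertex count at exactly the right rate. The key gadget is the paw graph (a triangle with one pendant vertex), which can be written as the cone $(K_2\sqcup K_1)*K_1$. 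Its seed $K_2\sqcup K_1$ has principal binomial edge ideal, hence $\pd(J_{K_2\sqcup K_1})=0$, so the disconnected branch of Lemma \ref{Lemma:KumarSarkar} yields $\pd(J_{\textup{paw}})=\max\{0+2,\,1\}=2$, while $\reg(J_{\textup{paw}})=3$ by (\ref{eq:RegJoinGraphs}). Coning the paw $n-4$ times then produces a connected graph $G$ with $\reg(J_G)=3$ and $\pd(J_G)=2+2(n-4)=2n-6$.

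I expect this $2n-6$ case to be the main obstacle: the naive guess that $\pd(J_{\textup{paw}})$ matches the ``standard'' disconnected seeds would only give $2n-7$, and the decisive point is verifying $\pd(J_{\textup{paw}})=2$ (which can be double-checked directly from the depth formula underlying Lemma \ref{Lemma:KumarSarkar}). Once the three families are in place, the projective dimensions realized are $[n-3,2n-7]\cup\{2n-6\}\cup\{2n-5\}=[n-3,2n-5]$, all at regularity $3$, and this works uniformly for every $n\ge 4$ (in particular the small values $n=4,5$ require no separate argument), which completes the proof.
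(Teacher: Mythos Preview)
Your argument is correct and takes a genuinely different route from the paper. The paper proceeds by induction on $n$: for the two top values $p=2n-5$ and $p=2n-6$ it invokes Corollaries \ref{Cor:pd=2n-5} and \ref{Cor:pd=2n-6} (hence the classification Theorems \ref{Thm:pd=2n-5} and \ref{Thm:pd=2n-6}), handles $p=n-3$ via $K_r\sqcup K_s$, and covers the intermediate range $n-2\le p\le 2n-7$ by coning a graph from $\textup{Graphs}(n-1)$ supplied by the inductive hypothesis. Your approach is instead fully constructive and non-inductive: three explicit seed families ($K_{n-k-2}\sqcup K_2$, the paw, and $P_3$), each fed through iterated cones, cover the whole range $[n-3,2n-5]$ using only Lemma \ref{Lemma:KumarSarkar} and formula (\ref{eq:RegJoinGraphs}). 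This is more elementary in that it avoids the depth-classification machinery altogether, and it yields concrete graphs for every $(p,3)$ in one shot, including the small cases $n=4,5$ without a separate base. One minor caution: when you compute $\pd(J_{\textup{paw}})=2$ via Lemma \ref{Lemma:KumarSarkar} applied to $\widetilde{G}=K_2\sqcup K_1$, the seed has an isolated vertex; the lemma as stated does not exclude this, but the cleanest way to nail down that value is to observe that the paw is decomposable as $K_3\cup_v K_2$, giving $\pd(J_{\textup{paw}})=\pd(J_{K_3})+\pd(J_{K_2})+1=1+0+1=2$ directly.
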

	\begin{proof}
		We proceed by induction on $n\ge4$. Suppose $n=4$. Then, the binomial edge ideals of the graphs $2K_2$, $(P_2\sqcup K_1)*K_1$ and $P_2*2K_1$ have regularity $3$ and projective dimension, respectively, 1, 2 and 3.
		
		Let $n>4$. If $p=2n-5$ or $p=2n-6$, then a graph $G$ on $n$ non-isolated vertices with $\pd(J_G)=p$ and $\reg(J_G)=3$ exists by Corollaries \ref{Cor:pd=2n-5}, \ref{Cor:pd=2n-6}. So, we can assume $n-3\le p\le 2n-7$. If $p=n-3$, then the binomial edge ideal of $K_r\sqcup K_s$ with $r,s\ge2$ and $r+s=n$ has projective dimension $n-3$ and regularity 3, by Theorem \ref{Thm:reg=3}(i). Now, let $n-2\le p\le 2n-7$. Then $(n-1)-3\le p-2\le 2(n-1)-7<2(n-1)-5$. Hence, by induction there exists a graph $\widetilde{G}$ on $n-1$ vertices with $\pd(J_{\widetilde{G}})=p-2$ and $\reg(J_{\widetilde{G}})=3$. Let $G=\widetilde{G}*K_1$. If $p-2=(n-1)-3$, then $\widetilde{G}$ is disconnected (Theorem \ref{Thm:pdGconnect}). By Lemma \ref{Lemma:KumarSarkar},
		\begin{align*}
		\pd(J_G)&=\max\{\pd(J_{\widetilde{G}})+2,n-3\}\\&=\max\{p,n-3\}=\max\{n-2,n-3\}=n-2.
		\end{align*}
		Otherwise, if $p-2\ge(n-1)-2$, then $\widetilde{G}$ is connected by Remark \ref{Rem:reg=3,GConnected}. Then, by Lemma \ref{Lemma:KumarSarkar}, $\pd(J_{G})=\pd(J_{\widetilde{G}})+2=p$, as desired.
	\end{proof}
	
	\subsection{Graphs with regularity $n-2$}
	In the next result, we consider graphs $G$ on $n$ non-isolated vertices having regularity $r=\reg(J_G)=n-2$. If $n=5$, then $r=3$ and we can apply Corollary \ref{Cor:reg=3}. Therefore, we assume $n\ge6$. In this case $r=n-2\ge\lceil\frac{n}{2}\rceil+1$. Thus, by Proposition \ref{Prop:refinedPd2}, the projective dimension for such graphs varies as follows: $r-2=n-4\le\pd(J_G)\le 2n-5$.\smallskip
	
	For the next result, we need the concept of \textit{decomposable graph} introduced by Rauf and Rinaldo in \cite{RR14}. A graph $G$ is called \textit{decomposable} if there exist two graphs $G_1$ and $G_2$ such that $V(G)=V(G_1)\cup V(G_2)$, $V(G_1)\cap V(G_2)=\{v\}$ where $v$ is a simplicial vertex for both $G_1$ and $G_2$, and such that $E(G)=E(G_1)\cup E(G_2)$. In such case, we write $G=G_1\cup_v G_2$ and say that $G$ \textit{is obtained by gluing $G_1$ and $G_2$ along $v$}. By \cite[Proposition 1.3]{HR18} it follows that
	\begin{align*}
	\pd(J_G)\ &=\ \pd(J_{G_1})+\pd(J_{G_2})+1,\\
	\reg(J_G)\ &=\ \reg(J_{G_1})+\reg(J_{G_2})-1.
	\end{align*}
	
	\begin{Lemma}\label{Lemma:CompletePdBound}
		Let $G$ be the graph with vertex set $V(G)=[n]$ and edge set
		$$
		E(G)=\big\{\{i,j\}:1\le i<j\le n-1\big\}\cup\big\{\{m,n\},\{m+1,n\},\dots,\{n-1,n\}\big\},
		$$
		for some $m\in[n-1]$. Then $\pd(J_G)\le 2n-m-3$ and $\reg(J_G)\le3$.
	\end{Lemma}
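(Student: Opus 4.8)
The plan is to recognize $G$ explicitly as an iterated cone (equivalently, a join) and then to read off both invariants from the join- and cone-formulas already recorded in this section. The first step is to locate the universal vertices of $G$. A vertex $i\in\{1,\dots,n-1\}$ is adjacent to all other vertices of $\{1,\dots,n-1\}$, and is adjacent to $n$ exactly when $i\ge m$; the vertex $n$ is adjacent only to $\{m,\dots,n-1\}$. Hence, for $m\ge2$, the universal vertices of $G$ are precisely $\{m,\dots,n-1\}$, they induce a $K_{n-m}$, and deleting them leaves $\{1,\dots,m-1\}$ inducing a $K_{m-1}$ together with the now-isolated vertex $n$. This would give the key structural identity
$$
G\ =\ K_{n-m}*(K_{m-1}\sqcup K_1)\qquad(m\ge2),
$$
while for $m=1$ one simply has $G=K_n$.

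Granting this description, the regularity bound should follow at once. For $m=1$ we have $\reg(J_{K_n})=2\le3$. For $m\ge2$ the second factor $K_{m-1}\sqcup K_1$ is not complete, so formula (\ref{eq:RegJoinGraphs}) applies and yields
$$
\reg(J_G)=\max\{\reg(J_{K_{n-m}}),\reg(J_{K_{m-1}\sqcup K_1}),3\}=3,
$$
because the binomial edge ideals of (disjoint unions of) complete graphs have regularity at most $2$, exactly as in Corollary \ref{Cor:pd=2n-5}. Alternatively, one may invoke Theorem \ref{Thm:reg=3}(ii) directly.

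For the projective dimension I would peel the universal vertices off one at a time. When $m=1$, Theorem \ref{Thm:pdGconnect} gives $\pd(J_{K_n})=n-2\le 2n-m-3$. When $m\ge2$, I would first keep a single universal vertex and set $H=K_1*(K_{m-1}\sqcup K_1)$; this is the complete graph $K_m$ with one pendant vertex, that is, a decomposable graph $H=K_m\cup_v K_2$. The gluing formula of \cite[Proposition 1.3]{HR18} then gives
$$
\pd(J_H)=\pd(J_{K_m})+\pd(J_{K_2})+1=(m-2)+0+1=m-1.
$$
Since $H$ is connected and coning preserves connectedness, $G$ is obtained from $H$ by $n-m-1$ successive cones, each governed by the connected branch of Lemma \ref{Lemma:KumarSarkar}, which adds $2$ to the projective dimension. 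Telescoping would give
$$
\pd(J_G)=\pd(J_H)+2(n-m-1)=(m-1)+2(n-m-1)=2n-m-3,
$$
so in fact equality holds, which is more than the stated bound.

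The main obstacle is really just the structural identification of $G$ as $K_{n-m}*(K_{m-1}\sqcup K_1)$; once this is secured, everything reduces to formulas proved earlier. The delicate points are the degenerate small cases: for $m=1$ one has $G=K_n$, and for $m=2$ the factor $K_{m-1}\sqcup K_1$ becomes $2K_1$, whose binomial edge ideal is $(0)$. Treating the base graph as the decomposable $K_m\cup_v K_2$, rather than coning directly over $2K_1$, is what keeps the base computation $\pd(J_H)=m-1$ uniform across all $m\ge2$ and avoids any ambiguity in the regularity and projective dimension of the zero ideal. Finally, I would check that every intermediate graph in the coning is connected, so that the connected case of Lemma \ref{Lemma:KumarSarkar} genuinely applies at each step.
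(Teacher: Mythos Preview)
Your argument is correct and takes a genuinely different route from the paper. The paper bounds $\reg(J_G)$ using the clique bound of \cite{ERT20} applied to the clique $[n-1]$, and bounds $\pd(J_G)$ by induction on $n$: for $m=n-1$ it decomposes $G$ as $K_{n-1}\cup_{n-1}K_2$, while for $m<n-1$ it applies Ohtani's lemma at the internal vertex $m$ and estimates each term of the resulting short exact sequence. Your approach instead identifies $G$ globally as the join $K_{n-m}*(K_{m-1}\sqcup K_1)$ and reads off both invariants from the join formula (\ref{eq:RegJoinGraphs}) and from iterating Lemma \ref{Lemma:KumarSarkar} over the connected base $H=K_m\cup_v K_2$. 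This is cleaner, avoids the Ohtani exact sequence and the induction entirely, and in fact yields the exact value $\pd(J_G)=2n-m-3$ for all $m\ge2$ rather than just an upper bound. The paper's approach, by contrast, only extracts equality in the boundary case $m=n-1$. Your handling of the small cases ($m=1$ giving $K_n$, and $m=2$ giving the base $2K_1$) is also correct; the choice to compute $\pd(J_H)$ via the decomposable structure $K_m\cup_v K_2$ rather than via Lemma \ref{Lemma:KumarSarkar} applied to the disconnected $K_{m-1}\sqcup K_1$ is the right one, since it keeps the answer uniform and sidesteps the zero-ideal issue.
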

	\begin{proof}
		For the regularity, note that in $G$ the set $W=[n-1]$ is a clique. Therefore, $\reg(J_G)\le n+2-|W|=3$. For the projective dimension, we proceed by induction on $n\ge2$. For $n=2$, $m=1$, $G$ is the path on 2 vertices and the statement is trivial.
			
		Let $n>2$ and $m = n-1$. Then $G$ is decomposable as $G=G_{[n-1]}\cup_{n-1}G_{\{n-1,n\}}$. Note that $G_{[n-1]}$ is a complete graph and $J_{G_{\{n-1,n\}}}$ is a principal ideal. Thus,
		\begin{align*}
		\pd(J_G)&=\pd(J_{{G_{[n-1]}}})+\pd(J_{G_{\{n-1,n\}}})+1\\&=(n-1)-2+0+1=n-2.
		\end{align*}
		Since $m=n-1$, then $2n-m-3=n-2$. Hence this case is verified.
		
		Suppose now $m<n-1$. Then, $m$ is an internal vertex of $G$, because it belongs to two different maximal cliques, namely $[n-1]$ and $\{m,m+1,\dots,n\}$. Applying Ohtani lemma to $m$, by the short exact sequence (\ref{eq:ShortExactSeqOhtani}) we obtain
		\begin{align*}
		     &\pd(J_G)\le\\
		     &\ \ \ \le\max\{\pd(x_m,y_m,J_{G\setminus m}),\pd(J_{G_m}),\pd(x_m,y_m,J_{G_m\setminus m})-1\}.
		\end{align*}
		The graph $G\setminus m$ on $n-1$ vertices is of the type described in the statement.
		Indeed, we can relabel the vertices of the graph $G\setminus m$ in the following way: the labels of
		vertices from 1 to $m-1$ remain unchanged, while the labels of the vertices from $m+1$
		to $n$ are each decreased by 1. Thus, by the inductive hypothesis we have $\pd(J_{G\setminus m})\le 2(n-1)-m-3=2n-m-5$, and so
		$$
		\pd(x_m,y_m,J_{G\setminus m})=\pd(J_{G\setminus m})+2\le 2n-m-3.
		$$
		For the other two inequalities, note that $G_m$ and $G_m\setminus m$ are complete graphs on $n$ and $n-1$ vertices, respectively. Hence, Theorem \ref{Thm:regGbounds}(i) gives
		\begin{align*}
		\pd(J_{G_m})&=n-2,\\
		\pd(x_m,y_m,J_{G_m\setminus m})-1&=\pd(J_{G_m\setminus m})+2-1=n-3+1=n-2.
		\end{align*}
		But $n-2\le 2n-m-3$ because $m\le n-1$, by construction. Finally, all the inequalities obtained show that $\pd(J_G)\le 2n-m-3$, as desired.
	\end{proof}
	
	We need the following technique. Let $e=\{u,v\}\in E(G)$. By $G\setminus e$ we denote the graph with $V(G\setminus e)=V(G)$ and $E(G\setminus e)=E(G)\setminus\{e\}$. By $G_e$ we denote the graph with $V(G_e)=V(G)$ and $E(G_e)=E(G)\cup\{\{w_1,w_2\}:w_1,w_2\in N_G(u)\ \text{or}\ w_1,w_2\in N_G(v)\}$. Set $f_e=x_uy_v-x_vy_u$. Then, we have a short exact sequence
	\begin{equation}\label{eq:ShortExactSeqRemoveEdge}
	0\rightarrow\frac{S}{(J_{G\setminus e}:f_e)}(-2)\longrightarrow\frac{S}{J_{G\setminus e}}\longrightarrow\frac{S}{J_G}\rightarrow 0.
	\end{equation}
	
	By \cite[Theorem 3.7]{MS14}, we have
	\begin{equation}\label{eq:GminusEdge}
	J_{G\setminus e}:f_e=J_{(G\setminus e)_e}+I_G,
	\end{equation}
	where $$I_G=(g_{P,t}:P:u,u_1,\dots,u_s,v\ \textit{is a path between}\ u\ \textit{and}\ v\ \textit{in}\ G\ \textit{and}\ 0\le t\le s),$$ with $g_{P,0}=x_{u_1}\cdots x_{u_s}$ and $g_{P,t}=y_{u_1}\cdots y_{u_t}x_{u_{t+1}}\cdots x_{u_s}$ for every $1\le t\le s$.
	
	\begin{Proposition}\label{Prop:reg=n-2}
		Let $n\ge6$ be a positive integer. Then, for all $n-4\le p\le 2n-5$, there exists a graph $G$ on $n$ non-isolated vertices, such that
		$$
		\pd(J_G)=p\ \ \ \text{and}\ \ \ \reg(J_G)=n-2.
		$$
	\end{Proposition}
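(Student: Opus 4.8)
The plan is to produce, for each integer $p$ with $n-4\le p\le 2n-5$, an explicit graph on $n$ non-isolated vertices having $\reg(J_G)=n-2$ and $\pd(J_G)=p$. I would split the target interval into a \emph{low} range $n-4\le p\le n-2$, treated by disjoint unions and decomposable graphs, and a \emph{high} range $n-1\le p\le 2n-5$, treated by a one-parameter edge-deformation of $P_{n-2}*2K_1$. Throughout I use Remark \ref{Rem:Gdecomp}, the gluing formulas of \cite{HR18}, and the fact (Theorem \ref{Thm:regGbounds}) that $\reg(J_{P_k})=k$ and $\pd(J_{K_t})=t-2$.

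For the low range I would compute invariants directly. The three-component graph $P_2\sqcup P_2\sqcup P_{n-4}$ has $\reg(J_G)=(2+2+(n-4))-2=n-2$ and $\pd(J_G)=(n-6)+2=n-4$; the two-component graph $P_2\sqcup(K_3\cup_v P_{n-4})$ has $\reg(J_G)=(2+(n-3))-1=n-2$ and $\pd(J_G)=(n-4)+1=n-3$; and the connected decomposable graph $K_4\cup_v P_{n-3}$ has $\reg(J_G)=2+(n-3)-1=n-2$ and $\pd(J_G)=2+(n-5)+1=n-2$. These realize $p\in\{n-4,n-3,n-2\}$ for all $n\ge 6$.

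For the high range, fix the path $P_{n-2}$ on $\{1,\dots,n-2\}$ and, for $2\le j\le n-2$, let $G_j$ be obtained by adjoining two non-adjacent vertices $u,v$ with $N_{G_j}(u)=\{1,\dots,n-2\}$ and $N_{G_j}(v)=\{1,\dots,j\}$. Then $G_{n-2}=P_{n-2}*2K_1$, so $\pd(J_{G_{n-2}})=2n-5$ by Theorem \ref{Thm:pd=2n-5} and $\reg(J_{G_{n-2}})=\max\{n-2,3\}=n-2$, while $G_{j-1}=G_j\setminus e$ for $e=\{v,j\}$. Each $G_j$ contains the induced path $P_{n-2}$, so $\reg(J_{G_j})\ge n-2$ automatically. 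The plan is to prove, by downward induction on $j$, the matching bound $\reg(J_{G_j})\le n-2$ together with the descent $\pd(J_{G_j})=\pd(J_{G_{j-1}})+1$, by feeding the edge $e=\{v,j\}$ into the short exact sequence (\ref{eq:ShortExactSeqRemoveEdge}) and controlling the colon ideal via (\ref{eq:GminusEdge}). If this gives $\pd(J_{G_j})=n-3+j$ and $\reg(J_{G_j})=n-2$ for $2\le j\le n-2$, then $G_2,\dots,G_{n-2}$ realize every $p$ from $n-1$ to $2n-5$, and with the low range we obtain all $p$ in $[n-4,2n-5]$.

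The hard part will be the regularity bookkeeping in the high range. Since $G_j$ has no clique larger than a triangle, the clique bound of \cite{ERT20} yields only $\reg(J_{G_j})\le n-1$, so the exact value $n-2$ must be extracted from the homology of (\ref{eq:ShortExactSeqRemoveEdge}); this is genuinely delicate, because pushing the deformation one step further (to $j=1$, where $v$ becomes a pendant vertex) does raise the regularity to $n-1$. Thus the argument must locate precisely the threshold at which the regularity drops and show that it remains equal to $n-2$ for all $j\ge 2$, while simultaneously verifying that the projective dimension decreases by exactly one at each edge removal. Bounding the projective dimension and the regularity of the colon ideal $J_{G_j\setminus e}:f_e$ in (\ref{eq:GminusEdge}) against those of $J_{G_j\setminus e}$ is the crux on which both conclusions rest.
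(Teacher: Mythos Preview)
Your low-range graphs are fine (and slight variants of the paper's), and your high-range family $G_j$ is literally the paper's family up to reversing the path labels: your $G_j$ is the paper's graph with parameter $m=n-1-j$. Both arguments run the short exact sequence~(\ref{eq:ShortExactSeqRemoveEdge}), so the overall strategy is the same.

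The real difference is \emph{which} edge you delete, and this is where your sketch leaves a gap. You remove $e=\{v,j\}$, an edge at the \emph{partial} apex. In $G_j$ the vertex $v$ has $j$ neighbours and $j$ has three (namely $j-1$, $j+1$, $u$), so the ideal $I_{G_j}$ in~(\ref{eq:GminusEdge}) contains not only $x_{j-1},y_{j-1}$ (from the path $v,j-1,j$) but also all the degree-two monomials coming from paths $v,i,u,j$ for $i<j$, and $(G_{j-1})_e$ has no large clique to exploit. Controlling $\pd$ and $\reg$ of that colon ideal simultaneously, and showing the projective dimension drops by \emph{exactly} one at each step, is the part you flag as ``the crux'' and do not carry out; with this edge it is genuinely messy.

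The paper instead removes the edge from the \emph{full} apex to the path endpoint far from $v$'s neighbourhood (in your labels, $e=\{u,n-2\}$). Since that endpoint has only one other neighbour, $I_G=(x_{n-3},y_{n-3})$ is linear, and $G\setminus e$ is decomposable as $P_2$ glued to a copy of the same family on $n-1$ vertices --- so the induction runs on $n$ rather than on $j$, and both $\pd$ and $\reg$ of $G\setminus e$ are read off from the gluing formulas. The colon ideal becomes $(x_{n-3},y_{n-3})+J_{\widetilde G}$ for a graph $\widetilde G$ containing a clique of size $n-3$; a short auxiliary induction (Lemma~\ref{Lemma:CompletePdBound}) then gives $\pd(J_{\widetilde G})\le 2n-m-5$ and $\reg(J_{\widetilde G})\le 3$, which is exactly what the sequence needs. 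Switching to this edge would close your gap with essentially no extra work.
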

	\begin{proof}
		If $p=n-4$, then $G=P_2\sqcup P_2\sqcup P_{n-4}$ has $\pd(J_G)=n-4$ and $\reg(J_G)=n-2$. If $p=n-3$, then $G=K_3\sqcup P_{n-3}$ has $\pd(J_G)=n-3$ and $\reg(J_G)=n-2$. If $p=n-2$, then consider the graph depicted below
		\begin{center}
			\begin{tikzpicture}[scale=0.8]
			\draw[-] (1,0) -- (8,0);
			\draw[-] (1,0) -- (1.5,1) -- (2,0) -- (2.5,1) -- (3,0);
			\filldraw[black, fill=white] (1,0) circle (2pt) node[below]{{\tiny1}};
			\filldraw[black, fill=white] (2,0) circle (2pt) node[below]{{\tiny2}};
			\filldraw[black, fill=white] (3,0) circle (2pt) node[below]{{\tiny3}};
			\filldraw[black, fill=white] (4,0) circle (2pt) node[below]{{\tiny4}};
			\filldraw[black, fill=white] (5.5,0) node[below]{$\cdots$};
			\filldraw[black, fill=white] (7,0) circle (2pt) node[below]{{\tiny$n-3$}};
			\filldraw[black, fill=white] (8,0) circle (2pt) node[below]{{\tiny$n-2$}};
			\filldraw[black, fill=white] (1.5,1) circle (2pt) node[above]{{\tiny$n-1$}};
			\filldraw[black, fill=white] (2.5,1) circle (2pt) node[above]{{\tiny$n$}};
			\end{tikzpicture}
		\end{center}\vspace*{-0.2cm}
		It is clear that $G$ is decomposable as $(G_{\{1,2,n-1\}}\cup_2 G_{\{2,3,n\}})\cup_3 G_{\{3,4,\dots,n-2\}}$. We set $G_1=G_{\{1,2,n-1\}}$, $G_2=G_{\{2,3,n\}}$ and $G_3=G_{\{3,4,\dots,n-2\}}$. $G_1$ and $G_2$ are complete graphs with three vertices each, while $G_3$ is a path on $n-4$ vertices. Therefore,
		\begin{align*}
		    \pd(J_G)&=\sum_{i=1}^3\pd(J_{G_i})+2=1+1+n-6+2=n-2,\\
		    \reg(J_G)&=\sum_{i=1}^3\reg(J_{G_i})-2=2+2+n-4-2=n-2.
		\end{align*}
		
		It remains to consider the case $n-1\le p\le 2n-5$. For this purpose, let $m\in\{1,\dots,n-3\}$ and consider the graph $G$ depicted below.
		\begin{center}
			\begin{tikzpicture}[scale=0.7]
			\draw[-] (12,0) -- (7,-2);
			\draw[-] (13,0) -- (7,-2);
			\draw[-] (7,0) -- (10,2);
			\draw[-] (8,0) -- (10,2);
			\draw[-] (12,0) -- (10,2);
			\draw[-] (13,0) -- (10,2);
			\draw[-] (6,0) -- (7,-2);
			\draw[-] (7,0) -- (7,-2);
			\draw[-] (8,0) -- (7,-2);
			\draw[-] (2,0) -- (13,0);
			\draw[-] (2,0) -- (7,-2);
			\draw[-] (3,0) -- (7,-2);
			\filldraw[black, fill=white] (2,0) circle (2pt) node[above]{{\tiny1}};
			\filldraw[black, fill=white] (3,0) circle (2pt) node[above]{{\tiny2}};
			\filldraw[black, fill=white] (4.5,0) node[above]{$\cdots$};
			\filldraw[black, fill=white] (6,0) circle (2pt) node[above]{{\tiny$m-1$}};
			\filldraw[black, fill=white] (7,0) circle (2pt) node[above]{{\tiny$m$}};
			\filldraw[black, fill=white] (8,0) circle (2pt) node[above]{{\tiny$m+1$}};
			\filldraw[black, fill=white] (10,0) node[above]{$\cdots$};
			\filldraw[black, fill=white] (12,0) circle (2pt) node[above,xshift=-0.1cm]{{\tiny$n-3$}};
			\filldraw[black, fill=white] (13,0) circle (2pt) node[above,xshift=0.2cm]{{\tiny$n-2$}};
            \filldraw[black, fill=white] (7,-2) circle (2pt) node[below]{{\tiny$n$}};
            \filldraw[black, fill=white] (10,2) circle (2pt) node[above]{{\tiny$n-1$}};
			\filldraw[black, fill=white] (5.9,-1.4) node[above]{$\cdots$};
			\filldraw[black, fill=white] (8.4,-1.4) node[above]{$\cdots$};
			\end{tikzpicture}
		\end{center}\vspace*{-0.2cm}
	    That is, $V(G)=[n]$ and
	    \begin{align*}
	     E(G)\ =&\ \big\{\{i,i+1\}:i=1,\dots,n-3\big\}\\\cup&\ \big\{\{i,n-1\}:i=m,\dots,n-2\big\}\\\cup&\ \big\{\{i,n\}:i=1,\dots,n-2\big\}.
	    \end{align*}
		We claim that $\pd(J_G)=2n-m-4$ and $\reg(J_G)=n-2$. Since $$\{2n-m-4:m=1,\dots,n-3\}=\{n-1,n,\dots,2n-5\},$$ the claim will conclude the proof.
		
		Firstly, we note that if $m=1$, then $G=G_{[n-2]}*G_{\{n-1,n\}}$. Since $G_{[n-2]}$ is a path on $n-2$ vertices and $G_{\{n-1,n\}}$ consists of two isolated vertices, by Theorem \ref{Thm:pd=2n-5} and formula (\ref{eq:RegJoinGraphs}) we obtain
		$$
		\pd(J_G)=2n-5,\ \ \ \reg(J_G)=n-2.
		$$
		
		If $m=2$, then $G$ is a $D_5$-type graph. Indeed, $G\in\mathcal{G}_{[n-2]}$. To see why this is true, using the notation of Definition \ref{Def:defG_T}, it is enough to set $u=n-1$, $v=1$, $V_0=\{2\}$, $V_1=\{3,\dots,n-2\}$ and $V_2=\{n\}$. Hence $\pd(J_G)=2n-6$. Furthermore, $\reg(J_G)\ge\reg(J_{G_{[n-2]}})=n-2$, but also $\reg(J_{G})\le n-2$ by Proposition \ref{Prop:pd=2n-6}.
		
		Thus our claim holds for $m=1,2$. Now, we proceed by induction on $n\ge5$. If $n=5$, then $m\in\{1,2\}$ and there is nothing to prove. Suppose $n\ge6$ and let $m\in\{3,\dots,n-3\}$. Set $e=\{1,n\}$ and let $f_e=x_1y_n-x_ny_1$. Then, by the short exact sequence (\ref{eq:ShortExactSeqRemoveEdge}), see also \cite[Proposition 2.1(a)]{KM16}, we have
		\begin{equation}\label{eq:pdboundn-21}
		\reg(J_G)\le\max\{\reg(J_{G\setminus e}),\reg(J_{G\setminus e}:f_e)+1\}.
		\end{equation}
		Note that $G\setminus e$ is decomposable as $G\setminus e=G_{\{1,2\}}\cup_2 G_{\{2,\dots,n\}}$. Using the induction on $G_{\{2,\dots,n\}}$ we have
		\begin{align}
		\nonumber\pd(J_{G\setminus e})&=\pd(J_{G_{\{1,2\}}})+\pd(J_{G_{\{2,\dots,n\}}})+1\\
		\label{eq:pdboundn-22}&=2(n-1)-(m-1)-4+1\\
		\nonumber&=2n-m-4,\\[0.2em]
		\label{eq:pdboundn-23}\reg(J_{G\setminus e})&=\reg(J_{G_{\{1,2\}}})+\reg(J_{G_{\{2,\dots,n\}}})-1=2+n-3-1=n-2.
		\end{align}
		
		By equation (\ref{eq:GminusEdge}), $J_{G\setminus e}:f_e=J_{(G\setminus e)_e}+I_G$. Note that any path in $G$ from $1$ to $n$ different from the path $1,n$ must contain the vertex 2. Hence $I_G=(x_2,y_2)$. Consequently, $J_{G\setminus e}:f_e=(J_{\widetilde{G}},x_2,y_2)$, where $\widetilde{G}$ is the graph $(G\setminus e)_e\setminus\{2\}$ with $V(\widetilde{G})=\{3,\dots,n\}$ and
		\begin{align*}
		E(\widetilde{G})\ =&\ \big\{\{i,j\}:3\le i<j\le n-2\big\}\\
		\cup&\ \big\{\{i,n\}:i=3,\dots,n-2\big\}\\
		\cup&\ \big\{\{j,n-1\}:j=m,\dots,n-2\big\}.
		\end{align*}
		Therefore, using Lemma \ref{Lemma:CompletePdBound} applied on $\widetilde{G}$ we get
		\begin{align}
		\nonumber\pd(J_{G\setminus e}:f_e)&=\pd(J_{\widetilde{G}},x_2,y_2)=\pd(J_{\widetilde{G}})+2\\
		\label{eq:pdboundn-24}&\le2(n-2)-m-3+2\\
		\nonumber&=2n-m-5,\\[0.2em]
		\label{eq:pdboundn-25}\reg(J_{G\setminus e}:f_e)+1&=\reg(J_{\widetilde{G}})+1\le 4\le n-2,
		\end{align}
		as $n\ge 6$. By (\ref{eq:pdboundn-22}) and (\ref{eq:pdboundn-24}) we have $\pd(J_{G\setminus e}:f_e)<\pd(J_{G\setminus e})$. Thus, using the short exact sequence (\ref{eq:ShortExactSeqRemoveEdge}) we obtain
		$$
		\pd(J_G)=\pd(J_{G\setminus e})=2n-m-4.
		$$
		Whereas, combining (\ref{eq:pdboundn-21}) with (\ref{eq:pdboundn-23}) and (\ref{eq:pdboundn-25}) we obtain $\reg(J_G)\le n-2$. Since also $\reg(J_G)\ge\reg(J_{G_{[n-2]}})=n-2$, as $G_{[n-2]}$ is a path on $n-2$ vertices, we obtain the equality. The inductive proof is complete.
	\end{proof}
	
	\begin{Remark}\label{Rem:Gconnectr=n-2}
		\rm Note that the graphs constructed in the previous proposition, for $p\ge n-2$, are all connected.
	\end{Remark}
	
	\subsection{The size of betti tables of binomial edge ideals of small graphs.}
	
	By putting together all results in this section we can determine the set
	$$
	\textup{pdreg}(n)\ =\ \big\{(\pd(J_G),\reg(J_G)):G\in\textup{Graphs}(n)\big\},
	$$
	for small values of $n$, where $\textup{Graphs}(n)$ denotes the class of all finite simple graphs on $n$ non-isolated vertices.
	
	\begin{Example}\label{Ex:InitialCasesPdReg}
		\rm We determine the set $\textup{pdreg}(n)$ for $n=3,4,5$ and $6$.
		\begin{enumerate}
			\item[\footnotesize{$({\bf n=3})$}] We have $\textup{pdreg}(3)=\{(1,2),(1,3)\}$. In the following list we display all the graphs $G$ on three non-isolated vertices and below each of them the pair $(\pd(J_G),\reg(J_G))$.\bigskip
				\begin{center}
					\begin{tikzpicture}[scale=0.8]
					\draw[-] (1,0) -- (2,0) -- (1.5,0.9) -- (1,0);
					\draw[-] (3,0) -- (4,0) -- (5,0);
					\filldraw[black, fill=white] (1,0) circle (2pt);
					\filldraw[black, fill=white] (2,0) circle (2pt);
					\filldraw[black, fill=white] (1.5,0.9) circle (2pt);
					\filldraw[black, fill=white] (3,0) circle (2pt);
					\filldraw[black, fill=white] (4,0) circle (2pt);
					\filldraw[black, fill=white] (5,0) circle (2pt);
					\filldraw[black, fill=white] (1.5,-0.2) node[below]{{\tiny$(1,2)$}};
					\filldraw[black, fill=white] (4,-0.2) node[below]{{\tiny$(1,3)$}};
					\end{tikzpicture}
				\end{center}\medskip
			\item[\footnotesize{$({\bf n=4})$}] We have $\textup{pdreg}(4)=\{(2,2),(1,3),(2,3),(3,3),(2,4)\}$. The following is a list of graphs on four non-isolated vertices that gives such pairs.\bigskip
			\begin{center}
				\begin{tikzpicture}[scale=0.8]
				\draw[-] (1,0) -- (2,0) -- (2,1) -- (1,1) -- (1,0) -- (2,1);
				\draw[-] (2,0) -- (1,1);
				\draw[-] (3.5,0) -- (3.5,1);
				\draw[-] (4.5,0) -- (4.5,1);
				\draw[-] (6,0) -- (7,1);
				\draw[-] (6,1) -- (7,1) -- (7,0) -- (6,1);
				\draw[-] (8.5,0) -- (8.5,1) -- (9.5,1) -- (9.5,0) -- (8.5,0) -- (9.5,1);
				\draw[-] (11,0) -- (12,0) -- (13,0) -- (14,0);	
				\filldraw[black, fill=white] (1,0) circle (2pt);
				\filldraw[black, fill=white] (2,0) circle (2pt);
				\filldraw[black, fill=white] (1,1) circle (2pt);
				\filldraw[black, fill=white] (2,1) circle (2pt);
				\filldraw[black, fill=white] (1.5,-0.2) node[below]{{\tiny$(2,2)$}};
				\filldraw[black, fill=white] (3.5,0) circle (2pt);
                \filldraw[black, fill=white] (4.5,0) circle (2pt);
                \filldraw[black, fill=white] (3.5,1) circle (2pt);
                \filldraw[black, fill=white] (4.5,1) circle (2pt);
                \filldraw[black, fill=white] (4,-0.2) node[below]{{\tiny$(1,3)$}};
                \filldraw[black, fill=white] (6,0) circle (2pt);
                \filldraw[black, fill=white] (7,0) circle (2pt);
                \filldraw[black, fill=white] (6,1) circle (2pt);
                \filldraw[black, fill=white] (7,1) circle (2pt);
                \filldraw[black, fill=white] (6.5,-0.2) node[below]{{\tiny$(2,3)$}};
                \filldraw[black, fill=white] (8.5,0) circle (2pt);
                \filldraw[black, fill=white] (9.5,0) circle (2pt);
                \filldraw[black, fill=white] (8.5,1) circle (2pt);
                \filldraw[black, fill=white] (9.5,1) circle (2pt);
                \filldraw[black, fill=white] (9,-0.2) node[below]{{\tiny$(3,3)$}};
                \filldraw[black, fill=white] (11,0) circle (2pt);
                \filldraw[black, fill=white] (12,0) circle (2pt);
                \filldraw[black, fill=white] (13,0) circle (2pt);
                \filldraw[black, fill=white] (14,0) circle (2pt);
                \filldraw[black, fill=white] (12.5,-0.2) node[below]{{\tiny$(2,4)$}};						
				\end{tikzpicture}
			\end{center}\medskip
		    Note that the second, third and fourth graph are, respectively, $K_2\sqcup K_2$, $(P_2\sqcup K_1)*K_1$, $P_2*2K_1$. These graphs have regularity $3$, by Theorem \ref{Thm:reg=3}.\medskip
		    \item[\footnotesize{$({\bf n=5})$}] It is $\textup{pdreg}(5)=\{(3,2),(2,3),(3,3),(4,3),(5,3),(2,4),(3,4),(4,4),(3,5)\}$. Furthermore, a list of graphs giving such pairs is given below.\bigskip
			\begin{center}
				\begin{tikzpicture}[scale=0.8]
				\draw[-] (0.8,0.8) -- (1.1,0) -- (1.9,0) -- (2.2,0.8) -- (1.5,1.33) -- (1.1,0) -- (2.2,0.8) -- (0.8,0.8) -- (1.9,0) -- (1.5,1.33) -- (0.8,0.8);
				\draw[-] (3.5,0) -- (3.5,1);
				\draw[-] (4,0) -- (5,0) -- (4.5,1) -- (4,0);
				\draw[-] (6,0.8) -- (6.7,1.33) -- (6.3,0) -- (6,0.8);
				\draw[-] (7.1,0) -- (6.7,1.33) -- (7.4,0.8) -- (7.1,0);
				\draw[-] (9.5,0) -- (8.4,0.8) -- (9.5,0) -- (9.8,0.8) -- (9.1,1.33) -- (8.7,0) -- (9.8,0.8) -- (9.5,0) -- (9.1,1.33) -- (8.4,0.8) -- (9.8,0.8);
				\draw[-] (11.9,0) -- (11.1,0) -- (10.8,0.8) -- (12.2,0.8) -- (11.5,1.33) -- (11.1,0) -- (12.2,0.8) -- (11.9,0) -- (11.5,1.33) -- (10.8,0.8) -- (12.2,0.8);
				\filldraw[black, fill=white] (0.8,0.8) circle (2pt);
				\filldraw[black, fill=white] (1.1,0) circle (2pt);
				\filldraw[black, fill=white] (1.9,0) circle (2pt);
				\filldraw[black, fill=white] (2.2,0.8) circle (2pt);
				\filldraw[black, fill=white] (1.5,1.33) circle (2pt);
				\filldraw[black, fill=white] (1.5,-0.2) node[below]{{\tiny$(3,2)$}};
				\filldraw[black, fill=white] (3.5,0) circle (2pt);
				\filldraw[black, fill=white] (3.5,1) circle (2pt);
				\filldraw[black, fill=white] (4,0) circle (2pt);
				\filldraw[black, fill=white] (5,0) circle (2pt);
				\filldraw[black, fill=white] (4.5,1) circle (2pt);
				\filldraw[black, fill=white] (4.3,-0.2) node[below]{{\tiny$(2,3)$}};
				\filldraw[black, fill=white] (6,0.8) circle (2pt);
				\filldraw[black, fill=white] (6.3,0) circle (2pt);
				\filldraw[black, fill=white] (7.1,0) circle (2pt);
				\filldraw[black, fill=white] (7.4,0.8) circle (2pt);
				\filldraw[black, fill=white] (6.7,1.33) circle (2pt);
				\filldraw[black, fill=white] (6.7,-0.2) node[below]{{\tiny$(3,3)$}};
				\filldraw[black, fill=white] (8.4,0.8) circle (2pt);
				\filldraw[black, fill=white] (8.7,0) circle (2pt);
				\filldraw[black, fill=white] (9.5,0) circle (2pt);
				\filldraw[black, fill=white] (9.8,0.8) circle (2pt);
				\filldraw[black, fill=white] (9.1,1.33) circle (2pt);
				\filldraw[black, fill=white] (9.1,-0.2) node[below]{{\tiny$(4,3)$}};
				\filldraw[black, fill=white] (10.8,0.8) circle (2pt);
				\filldraw[black, fill=white] (11.1,0) circle (2pt);
				\filldraw[black, fill=white] (11.9,0) circle (2pt);
				\filldraw[black, fill=white] (12.2,0.8) circle (2pt);
				\filldraw[black, fill=white] (11.5,1.33) circle (2pt);
				\filldraw[black, fill=white] (11.5,-0.2) node[below]{{\tiny$(5,3)$}};
				\end{tikzpicture}
			\end{center}\medskip
		    \begin{center}
		    	\begin{tikzpicture}[scale=0.7]
		    	\draw[-] (3.5,0) -- (3.5,1);
		    	\draw[-] (4,0) -- (5,0) -- (4.5,1);
		    	\draw[-] (7,0) -- (6.5,1) -- (6,0) -- (8,0);
		    	\draw[-] (9,0) -- (10.05,1) -- (9.7,0) -- (11.1,0) -- (10.05,1) -- (10.4,0) -- (9,0);
		    	\draw[-] (12,0) -- (14,0);
		    	\filldraw[black, fill=white] (3.5,0) circle (2pt);
		    	\filldraw[black, fill=white] (3.5,1) circle (2pt);
		    	\filldraw[black, fill=white] (4,0) circle (2pt);
		    	\filldraw[black, fill=white] (5,0) circle (2pt);
		    	\filldraw[black, fill=white] (4.5,1) circle (2pt);
		    	\filldraw[black, fill=white] (4.3,-0.2) node[below]{{\tiny$(2,4)$}};
		    	\filldraw[black, fill=white] (6,0) circle (2pt);
		    	\filldraw[black, fill=white] (6.5,1) circle (2pt);
		    	\filldraw[black, fill=white] (7,0) circle (2pt);
		    	\filldraw[black, fill=white] (7.5,0) circle (2pt);
		    	\filldraw[black, fill=white] (8,0) circle (2pt);
		    	\filldraw[black, fill=white] (7,-0.2) node[below]{{\tiny$(3,4)$}};
		    	\filldraw[black, fill=white] (9,0) circle (2pt);
		    	\filldraw[black, fill=white] (10.05,1) circle (2pt);
		    	\filldraw[black, fill=white] (9.7,0) circle (2pt);
		    	\filldraw[black, fill=white] (10.4,0) circle (2pt);
		    	\filldraw[black, fill=white] (11.1,0) circle (2pt);
		    	\filldraw[black, fill=white] (10.05,-0.2) node[below]{{\tiny$(4,4)$}};
		    	\filldraw[black, fill=white] (12,0) circle (2pt);
		    	\filldraw[black, fill=white] (12.5,0) circle (2pt);
		    	\filldraw[black, fill=white] (13,0) circle (2pt);
		    	\filldraw[black, fill=white] (13.5,0) circle (2pt);
		    	\filldraw[black, fill=white] (14,0) circle (2pt);
		    	\filldraw[black, fill=white] (13,-0.2) node[below]{{\tiny$(3,5)$}};
		    	\end{tikzpicture}
		    \end{center}\medskip
	    	Note that each graph $G$ displayed, with $\pd(J_G)\ge n-2=3$, is connected. The graphs with regularity 3 are constructed as in Corollary \ref{Cor:reg=3}. They are, in the given order: $K_2\sqcup K_3$, $(K_2\sqcup K_2)*K_1$, $((P_2\sqcup K_1)*K_1)*K_1$, and $(P_2*2K_1)*K_1$. Moreover, since $n=5$, by Corollary \ref{Cor:pd=2n-5}, if $\reg(J_G)=n-1=4$ then $\pd(J_G)\le 2n-6=4$.
	        \item[\footnotesize{$({\bf n=6})$}] In the following, we list all pairs of the set $\textup{pdreg}(6)$, and for each pair $(p,r)$ in the set a graph $G$ with $(\pd(J_G),\reg(J_G))=(p,r)$.\bigskip
	        \begin{center}
	        	\begin{tikzpicture}[scale=0.8]
	        	\draw[-] (1,0) -- (1,1.6) -- (1.5,0.8) -- (0,1.6) -- (1,0) -- (0,0) -- (-0.5,0.8) -- (1,0) -- (1.5,0.8) -- (0,0) -- (1,1.6) -- (0,1.6) -- (0,0) -- (-0.5,0.8) -- (0,1.6) -- (1,1.6) -- (-0.5,0.8) -- (1.5,0.8);
	        	\filldraw[black, fill=white] (0,0) circle (2pt);
	        	\filldraw[black, fill=white] (-0.5,0.8) circle (2pt);
	        	\filldraw[black, fill=white] (1,0) circle (2pt);
	        	\filldraw[black, fill=white] (1.5,0.8) circle (2pt);
	        	\filldraw[black, fill=white] (0,1.6) circle (2pt);
	        	\filldraw[black, fill=white] (1,1.6) circle (2pt);
	        	\filldraw (0.5,-0.2)node[below]{{\tiny$(4,2)$}};
	        	
	        	\draw[-] (2.5,0) -- (2.5,1.6);
	        	\draw[-] (4,0) -- (3,0) -- (3,1.6) -- (4,0) -- (4,1.6) -- (3,0) --(4,1.6) -- (3,1.6);
	        	\filldraw[black, fill=white] (2.5,0) circle (2pt);
	        	\filldraw[black, fill=white] (2.5,1.6) circle (2pt);
	        	\filldraw[black, fill=white] (3,0) circle (2pt);
	        	\filldraw[black, fill=white] (3,1.6) circle (2pt);
	        	\filldraw[black, fill=white] (4,0) circle (2pt);
	        	\filldraw[black, fill=white] (4,1.6) circle (2pt);
	        	\filldraw (3.3,-0.2)node[below]{{\tiny$(3,3)$}};
	        	
	        	\draw[-] (5.5,1.6) -- (7,0.8) -- (6.5,1.6) -- (6.5,0) -- (5.5,1.6) -- (5.5,0) -- (5,0.8) -- (5.5,1.6) -- (6.5,1.6) -- (6.5,0) -- (7,0.8);
	        	\filldraw[black, fill=white] (5.5,0) circle (2pt);
	        	\filldraw[black, fill=white] (5,0.8) circle (2pt);
	        	\filldraw[black, fill=white] (6.5,0) circle (2pt);
	        	\filldraw[black, fill=white] (7,0.8) circle (2pt);
	        	\filldraw[black, fill=white] (5.5,1.6) circle (2pt);
	        	\filldraw[black, fill=white] (6.5,1.6) circle (2pt);
	        	\filldraw (6,-0.2)node[below]{{\tiny$(4,3)$}};
	        	\draw[-] (8.5,1.6) -- (10,0.8) -- (9.5,1.6) -- (9.5,0) -- (8.5,1.6) -- (8.5,0) -- (8,0.8) -- (8.5,1.6) -- (9.5,1.6) -- (9.5,0) -- (10,0.8);
	        	\draw[-] (8,0.8) -- (9.5,1.6) -- (8.5,0);
	        	\filldraw[black, fill=white] (8.5,0) circle (2pt);
	        	\filldraw[black, fill=white] (8,0.8) circle (2pt);
	        	\filldraw[black, fill=white] (9.5,0) circle (2pt);
	        	\filldraw[black, fill=white] (10,0.8) circle (2pt);
	        	\filldraw[black, fill=white] (8.5,1.6) circle (2pt);
	        	\filldraw[black, fill=white] (9.5,1.6) circle (2pt);
	        	\filldraw (9,-0.2)node[below]{{\tiny$(5,3)$}};
	        	
	        	\draw[-] (11,0.8) -- (11.5,1.6) -- (13,0.8) -- (12.5,1.6) -- (12.5,0) -- (11.5,1.6) -- (11.5,0) -- (11.5,1.6) -- (12.5,1.6) -- (12.5,0) -- (13,0.8) -- (12.5,0) -- (11,0.8) -- (13,0.8);
	        	\draw[-] (11,0.8) -- (12.5,1.6) -- (11.5,0) -- (13,0.8);
	        	\filldraw[black, fill=white] (11.5,0) circle (2pt);
	        	\filldraw[black, fill=white] (11,0.8) circle (2pt);
	        	\filldraw[black, fill=white] (12.5,0) circle (2pt);
	        	\filldraw[black, fill=white] (13,0.8) circle (2pt);
	        	\filldraw[black, fill=white] (11.5,1.6) circle (2pt);
	        	\filldraw[black, fill=white] (12.5,1.6) circle (2pt);
	        	\filldraw (12,-0.2)node[below]{{\tiny$(6,3)$}};
	        	
	        	\draw[-] (14.5,1.6) -- (16,0.8) -- (15.5,1.6) -- (15.5,0) -- (14.5,1.6) -- (14.5,0) -- (14,0.8) -- (14.5,1.6) -- (15.5,1.6) -- (15.5,0) -- (16,0.8) -- (14,0.8) -- (14.5,0) -- (15.5,0) -- (15.5,1.6); 
	        	\draw[-] (15.5,1.6) -- (16,0.8) -- (14,0.8) -- (14.5,0) -- (15.5,1.6) -- (15.5,0) -- (14.5,0) -- (14.5,1.6) -- (14,0.8) -- (16,0.8) -- (14.5,0);
	        	\draw[-] (14,0.8) -- (15.5,1.6);
	        	\filldraw[black, fill=white] (14.5,0) circle (2pt);
	        	\filldraw[black, fill=white] (14,0.8) circle (2pt);
	        	\filldraw[black, fill=white] (15.5,0) circle (2pt);
	        	\filldraw[black, fill=white] (16,0.8) circle (2pt);
	        	\filldraw[black, fill=white] (14.5,1.6) circle (2pt);
	        	\filldraw[black, fill=white] (15.5,1.6) circle (2pt);
	        	\filldraw (15,-0.2)node[below]{{\tiny$(7,3)$}};
	        	\end{tikzpicture}
	        \end{center}
        \begin{center}
        	\begin{tikzpicture}[scale=0.83]
            \draw[-] (0.6,0) -- (0.6,0.8);
            \draw[-] (1.1,0) -- (1.1,0.8);
            \draw[-] (1.6,0) -- (1.6,0.8);
            \filldraw[black, fill=white] (1.6,0) circle (2pt);
            \filldraw[black, fill=white] (1.6,0.8) circle (2pt);
            \filldraw[black, fill=white] (0.6,0) circle (2pt);
            \filldraw[black, fill=white] (0.6,0.8) circle (2pt);
            \filldraw[black, fill=white] (1.1,0) circle (2pt);
            \filldraw[black, fill=white] (1.1,0.8) circle (2pt);
            \filldraw (1.1,-0.2)node[below]{{\tiny$(2,4)$}};
            
            \draw[-] (2.5,0) -- (3,0) -- (2.75,0.8) -- (2.5,0);
            \draw[-] (3.4,0) -- (4.3,0);
        	\filldraw[black, fill=white] (2.5,0) circle (2pt);
        	\filldraw[black, fill=white] (3,0) circle (2pt);
        	\filldraw[black, fill=white] (2.75,0.8) circle (2pt);
        	\filldraw[black, fill=white] (3.4,0) circle (2pt);
        	\filldraw[black, fill=white] (3.85,0) circle (2pt);
        	\filldraw[black, fill=white] (4.3,0) circle (2pt);
        	\filldraw (3.5,-0.2)node[below]{{\tiny$(3,4)$}};
        	
        	\draw[-] (6,0) -- (5.75,0.8) -- (5.5,0) -- (5.25,0.8) -- (5,0) -- (6.5,0);
        	\filldraw[black, fill=white] (5,0) circle (2pt);
        	\filldraw[black, fill=white] (5.5,0) circle (2pt);
        	\filldraw[black, fill=white] (6,0) circle (2pt);
        	\filldraw[black, fill=white] (6.5,0) circle (2pt);
        	\filldraw[black, fill=white] (5.25,0.8) circle (2pt);
        	\filldraw[black, fill=white] (5.75,0.8) circle (2pt);
        	\filldraw (5.75,-0.2)node[below]{{\tiny$(4,4)$}};
        	
        	\draw[-] (7.8,0.8) -- (8.05,0) -- (8.3,0.8) -- (8.05,0) -- (8.8,0.8) -- (8.55,1.6) -- (8.3,0.8) --  (7.3,0.8) -- (8.05,0) -- (7.3,0.8) -- (8.8,0.8);
        	\filldraw[black, fill=white] (7.3,0.8) circle (2pt);
        	\filldraw[black, fill=white] (7.8,0.8) circle (2pt);
        	\filldraw[black, fill=white] (8.3,0.8) circle (2pt);
        	\filldraw[black, fill=white] (8.8,0.8) circle (2pt);
        	\filldraw[black, fill=white] (8.05,0) circle (2pt);
        	\filldraw[black, fill=white] (8.55,1.6) circle (2pt);
        	\filldraw (8.05,-0.2)node[below]{{\tiny$(5,4)$}};
        	
        	\draw[-] (9.6,0.8) -- (10.35,0) -- (10.1,0.8) -- (10.35,0) -- (10.6,0.8) -- (10.6,1.6) -- (10.1,0.8) -- (10.6,1.6) -- (11.1,0.8) -- (9.6,0.8) -- (10.35,0) -- (11.1,0.8) -- (10.35,0);
        	\filldraw[black, fill=white] (9.6,0.8) circle (2pt);
        	\filldraw[black, fill=white] (10.1,0.8) circle (2pt);
        	\filldraw[black, fill=white] (10.6,0.8) circle (2pt);
        	\filldraw[black, fill=white] (11.1,0.8) circle (2pt);
        	\filldraw[black, fill=white] (10.35,0) circle (2pt);
        	\filldraw[black, fill=white] (10.6,1.6) circle (2pt);
        	\filldraw (10.35,-0.2)node[below]{{\tiny$(6,4)$}};
        	
        	\draw[-] (11.9,0.8) -- (12.65,0) -- (12.9,0.8) -- (12.65,0) -- (13.4,0.8) -- (12.65,1.6) -- (12.9,0.8) -- (12.65,1.6) -- (12.4,0.8) -- (12.65,1.6) -- (13.4,0.8) -- (11.9,0.8) -- (12.65,0) -- (13.4,0.8) -- (12.65,0) -- (12.4,0.8)  -- (12.65,1.6) -- (11.9,0.8);
        	\filldraw[black, fill=white] (11.9,0.8) circle (2pt);
        	\filldraw[black, fill=white] (12.4,0.8) circle (2pt);
        	\filldraw[black, fill=white] (12.9,0.8) circle (2pt);
        	\filldraw[black, fill=white] (13.4,0.8) circle (2pt);
        	\filldraw[black, fill=white] (12.65,0) circle (2pt);
        	\filldraw[black, fill=white] (12.65,1.6) circle (2pt);
        	\filldraw (12.65,-0.2)node[below]{{\tiny$(7,4)$}};
        	\end{tikzpicture}
        \end{center}\smallskip
        \begin{center}
        	\begin{tikzpicture}[scale=0.82]
        	\draw[-] (2.5,0) -- (2.5,1.2);
        	\draw[-] (3.5,0) -- (3.5,1.2);
        	\filldraw[black, fill=white] (2.5,0) circle (2pt);
        	\filldraw[black, fill=white] (2.5,0.6) circle (2pt);
        	\filldraw[black, fill=white] (2.5,1.2) circle (2pt);
        	\filldraw[black, fill=white] (3.5,0) circle (2pt);
            \filldraw[black, fill=white] (3.5,0.6) circle (2pt);
            \filldraw[black, fill=white] (3.5,1.2) circle (2pt);
        	\filldraw (3,-0.2)node[below]{{\tiny$(3,5)$}};
        	\draw[-] (5,0) -- (4.75,0.8) -- (4.5,0) -- (6.5,0);
        	\filldraw[black, fill=white] (4.5,0) circle (2pt);
        	\filldraw[black, fill=white] (5,0) circle (2pt);
        	\filldraw[black, fill=white] (5.5,0) circle (2pt);
        	\filldraw[black, fill=white] (6,0) circle (2pt);
        	\filldraw[black, fill=white] (6.5,0) circle (2pt);
        	\filldraw[black, fill=white] (4.75,0.8) circle (2pt);
        	\filldraw (5.5,-0.2)node[below]{{\tiny$(4,5)$}};
        	\draw[-] (7.5,0) -- (9.5,0) -- (8.5,0.8) -- (7.5,0) -- (8,0) -- (8.5,0.8) -- (8.5,0) -- (9,0) -- (8.5,0.8);
        	\filldraw[black, fill=white] (7.5,0) circle (2pt);
        	\filldraw[black, fill=white] (8,0) circle (2pt);
        	\filldraw[black, fill=white] (8.5,0) circle (2pt);
        	\filldraw[black, fill=white] (9,0) circle (2pt);
        	\filldraw[black, fill=white] (9.5,0) circle (2pt);
        	\filldraw[black, fill=white] (8.5,0.8) circle (2pt);
        	\filldraw (8.5,-0.2)node[below]{{\tiny$(5,5)$}};
        	\draw[-] (10.5,0) -- (13,0);
        	\filldraw[black, fill=white] (10.5,0) circle (2pt);
        	\filldraw[black, fill=white] (11,0) circle (2pt);
        	\filldraw[black, fill=white] (11.5,0) circle (2pt);
        	\filldraw[black, fill=white] (12,0) circle (2pt);
        	\filldraw[black, fill=white] (12.5,0) circle (2pt);
        	\filldraw[black, fill=white] (13,0) circle (2pt);
        	\filldraw (11.75,-0.2)node[below]{{\tiny$(4,6)$}};
        	\end{tikzpicture}
        \end{center}\medskip
        Note that all graphs $G$ displayed, with $\pd(J_G)\ge n-2=4$, are connected. Furthermore, the graphs with regularity 3 are constructed as in Corollary \ref{Cor:reg=3}. Whereas, the graphs with regularity $n-2=4$ are constructed by using Proposition \ref{Prop:reg=n-2}. Finally, if $\reg(J_G)=n-1$, then $\pd(J_G)\le 2n-7=5$ by Corollary \ref{Cor:pd=2n-5} and Proposition \ref{Prop:pd=2n-6}.
		\end{enumerate}
	\end{Example}

	\section{The size of the betti table of binomial edge ideals}\label{FS22:sec3}
	Let $n\ge1$ be an integer. As before, denote by $\textup{Graphs}(n)$ the class of all finite simple graphs on $n$ non-isolated vertices, and let
	$$
	\textup{pdreg}(n)\ =\ \big\{(\pd(J_G),\reg(J_G)):G\in\textup{Graphs}(n)\big\},
	$$
	be the set of the sizes of the Betti tables of $J_G\subset S=K[x_1,\dots,x_n,y_1,\dots,y_n]$, as $G$ ranges over all graphs on $n$ non-isolated vertices.

    Note that we are allowing $K$ to be any field.\smallskip
	
	Finally, we can state our main result in the article.
	
	\begin{Theorem}\label{Thm:pdreg(n)}
		For all $n\ge3$,
		\begin{equation}\label{eq:pdregSet}
		\begin{aligned}
			\textup{pdreg}(n)\ =\ \big\{(n-2,2),(n-2,n)\big\}\cup\bigcup_{r=3}^{\lfloor\frac{n}{2}\rfloor+1}\big(\bigcup_{p=n-r}^{2n-5}\{(p,r)\}\big)\ \cup\ \\
		\cup \bigcup_{r=\lceil\frac{n}{2}\rceil+1}^{n-2}\big(\bigcup_{p=r-2}^{2n-5}\{(p,r)\}\big)\cup A_n,
		\end{aligned}
		\end{equation}
		where $A_n=\{(p,r)\in\textup{pdreg}(n):r=n-1\}$.
	\end{Theorem}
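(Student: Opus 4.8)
The plan is to prove the two inclusions separately, with essentially all the work on realizability. For the inclusion ``$\subseteq$'', let $G\in\textup{Graphs}(n)$ and set $p=\pd(J_G)$, $r=\reg(J_G)$. By Theorem \ref{Thm:regGbounds} we have $2\le r\le n$; the cases $r=2$ and $r=n$ force $G=K_n$ and $G=P_n$, giving the two isolated pairs $(n-2,2)$ and $(n-2,n)$. If $r=n-1$ the pair lies in $A_n$ by the very definition of $A_n$, so nothing is to be checked. For $3\le r\le n-2$, Proposition \ref{Prop:refinedPd1} yields $\max\{n-r,r-2\}\le p\le 2n-5$, and since the computation there identifies $\max\{n-r,r-2\}$ as $n-r$ on $[3,\lfloor n/2\rfloor+1]$ and as $r-2$ on $[\lceil n/2\rceil+1,n-2]$, the pair $(p,r)$ lands in the corresponding union. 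Thus every admissible pair belongs to the right-hand side.

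For ``$\supseteq$'' I would split the target region along the connectivity threshold $p=n-2$ provided by Theorem \ref{Thm:pdGconnect}: a connected graph has $\pd\ge n-2$, so every pair with $p\le n-3$ must be realized by a disconnected graph, and I realize these directly and without induction. Given $(p,r)$ with $\max\{n-r,r-2\}\le p\le n-3$, put $c=n-1-p\ge2$ and take $G=K_{a_1}\sqcup\dots\sqcup K_{a_{c-1}}\sqcup P_{\ell}$ with $\ell=r-c+1$ and the $a_i\ge2$ chosen to use the remaining $n-\ell$ vertices. Formulas (\ref{eq:pdJGc}) and (\ref{eq:regJGc}) give $\pd(J_G)=n-c-1=p$ and $\reg(J_G)=2(c-1)+\ell-(c-1)=r$, while the admissibility conditions $\ell\ge2$ and $n-\ell\ge 2(c-1)$ translate back into exactly $c\le r-1$ and $c\le n-r+1$, i.e. $p\ge\max\{n-r,r-2\}$. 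Hence the whole disconnected tail is covered by this single explicit family.

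The connected part $n-2\le p\le 2n-5$ I would treat by induction on $n$, the cases $n\le6$ being Example \ref{Ex:InitialCasesPdReg} (where the displayed graphs with $\pd\ge n-2$ are connected). The boundary rows are already done: $r=3$ by Corollary \ref{Cor:reg=3} and $r=n-2$ by Proposition \ref{Prop:reg=n-2}, and the top columns $p=2n-5,2n-6$ by Corollaries \ref{Cor:pd=2n-5} and \ref{Cor:pd=2n-6}. For an interior row $4\le r\le n-3$ the engine is the cone $G=\widetilde{G}*K_1$. For $n-1\le p\le 2n-7$ the induction hypothesis furnishes a connected $\widetilde{G}$ on $n-1$ vertices with $(\pd,\reg)=(p-2,r)$; the connected branch of Lemma \ref{Lemma:KumarSarkar} gives $\pd(J_G)=p$, and (\ref{eq:RegJoinGraphs}) gives $\reg(J_G)=\max\{r,3\}=r$. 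The single remaining value $p=n-2$ is produced by coning the disconnected graph $\widetilde{G}=K_{n-r}\sqcup P_{r-1}$, which has $\pd(J_{\widetilde{G}})=n-4$ and $\reg(J_{\widetilde{G}})=r$: the disconnected branch of Lemma \ref{Lemma:KumarSarkar} returns $\pd(J_G)=\max\{(n-4)+2,\,n-3\}=n-2$, and again (\ref{eq:RegJoinGraphs}) gives $\reg(J_G)=r$.

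The main obstacle is precisely this value $p=n-2$. Coning a connected graph raises the projective dimension by exactly two, and the smallest projective dimension available to a connected graph on $n-1$ vertices is $(n-1)-2=n-3$; hence $p=n-2$ is the one connected value unreachable from the connected part of level $n-1$, and it must come from a disconnected source of maximal possible projective dimension $n-4$. The exact arithmetic fit $\pd(K_{n-r}\sqcup P_{r-1})=n-4=(n-1)-3$ is what makes the disconnected branch of Lemma \ref{Lemma:KumarSarkar} output $n-2$ rather than something smaller. Beyond this, the remaining care is routine bookkeeping: checking that the two lower-bound formulas $n-r$ and $r-2$ agree at the seam $r\approx n/2$ so that the two unions cover $[3,n-2]$ without gap or clash, verifying the index ranges at each inductive step, and confirming the base cases $n\le6$; the row $r=n-1$ requires nothing, since $A_n$ is defined to be whatever occurs there.
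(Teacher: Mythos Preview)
Your proof is correct and follows the same overall architecture as the paper's---inclusion via Proposition~\ref{Prop:refinedPd1}, boundary rows and columns via Corollaries~\ref{Cor:pd=2n-5}, \ref{Cor:pd=2n-6}, \ref{Cor:reg=3} and Proposition~\ref{Prop:reg=n-2}, and the inductive engine $\widetilde{G}*K_1$ for the connected interior---but you streamline two places. For the disconnected range $p\le n-3$ the paper runs a separate induction, descending from level $n$ to level $n-2$ via $\widetilde{G}\sqcup K_2$; your one-shot family $K_{a_1}\sqcup\dots\sqcup K_{a_{c-1}}\sqcup P_\ell$ with $c=n-1-p$ and $\ell=r-c+1$ eliminates that induction entirely and makes the arithmetic transparent. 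For the single connected value $p=n-2$ the paper splits into two subcases, $r=n-3$ (coning $P_{n-4}\sqcup K_3$) and $4\le r\le n-4$ (coning $G_1\sqcup K_2$, where $G_1$ comes from a further inductive call at level $n-3$); your uniform choice $\widetilde{G}=K_{n-r}\sqcup P_{r-1}$ handles all $4\le r\le n-3$ at once, specializes to the paper's graph when $r=n-3$, and avoids the extra descent to level $n-3$. The paper's route is more systematically recursive, while yours is shorter and produces every graph explicitly.
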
\smallskip

    The following picture describes the set $\textup{pdreg}(n)$ for $n\ge3$. In the $(p,r)$th position of the diagram we collocate the pair $(p,r)$ if there exists $G\in\textup{Graphs}(n)$ such that $\pd(J_G)=p$ and $\reg(J_G)=r$.
    \begin{equation*}
         \tikzfig{grafico}
    \end{equation*}\medskip
    
    Note that the lattice points in the $(n-1)$th row are empty, because we do not specify the set $A_n$ in Theorem \ref{Thm:pdreg(n)}.
    
	Now, we are ready to prove our main result in the article.
	\begin{proof}[Proof of Theorem \ref{Thm:pdreg(n)}.]
		Let $(p,r)\in\textup{pdreg}(n)$. By Theorem \ref{Thm:regGbounds}, $2\le r\le n$, and $(p,2),(p,n)\in\textup{pdreg}(n)$ if and only if $p=n-2$. Furthermore, by Propositions \ref{Prop:refinedPd1} and \ref{Prop:refinedPd2}, if $3\le r\le\lfloor\frac{n}{2}\rfloor+1$ then $n-r\le p\le2n-5$, and if $\lceil\frac{n}{2}\rceil+1\le r\le n-2$ then $r-2\le p\le 2n-5$. Hence, we see that the set $\textup{pdreg}(n)$ is contained in the second set written in (\ref{eq:pdregSet}). Therefore, we only need to prove the other inclusion.
		
		We proceed by induction on $n\ge3$. By induction we also prove the following\medskip\\
		\textbf{Claim $(*)$}\ \ For all pairs $(p,r)\in\textup{pdreg}(n)\setminus A_n$ with $p\ge n-2$, there exists a connected graph $G\in\textup{Graphs}(n)$ such that $(\pd(J_G),\reg(J_G))=(p,r)$.\medskip
		
		Now, we start with our inductive proof. If $n=3,4,5,6$, by Example \ref{Ex:InitialCasesPdReg}, the theorem and the \textbf{Claim $(*)$} hold true.
		
		Suppose now $n\ge7$. Let $(p,r)\in\textup{pdreg}(n)$. Firstly we prove the \textbf{Claim $(*)$}.
		\medskip\\
		\textit{Proof of} \textbf{Claim $(*)$}. We distinguish several cases.
		\medskip\\
		{\bf Case} ${\bf r=2}$ or ${\bf r=n}$. Then $p=n-2$ and the pairs $(n-2,2)$, $(n-2,n)$ belongs to $\textup{pdreg}(n)$, by virtue of Theorem \ref{Thm:regGbounds}(i)-(ii).
		\medskip\\
		{\bf Case} ${\bf 3\le r\le n-2}$. If $p=2n-5,2n-6$, then $3\le r\le n-2$ and $(p,r)\in\textup{pdreg}(n)$ for all such values of $p$ and $r$, by using Corollaries \ref{Cor:pd=2n-5} and \ref{Cor:pd=2n-6} and Proposition \ref{Prop:pd=2n-6}. If $r=3$ or $r=n-2$, all possible pairs $(p,3)$ and $(p,n-2)$ belong to $\textup{pdreg}(n)$, by Corollary \ref{Cor:reg=3} and Proposition \ref{Prop:reg=n-2}.
		
		It remains to construct $G\in\textup{Graphs}(n)$ such that $(\pd(J_G),\reg(J_G))=(p,r)$ for all $4\le r\le n-3$ and all admissible values that $n-2\le p\le 2n-7$ can assume.
		
		Suppose $n-3\le p\le 2n-7$ and $4\le r\le n-3$. Then $(p-2,r)\in\textup{pdreg}(n-1)$. To prove this, note that $(n-1)-3\le p-2\le 2(n-1)-7$ and $4\le r\le (n-1)-2$. By induction there exists a connected graph $\widetilde{G}\in\textup{Graphs}(n-1)$ such that
		$$
		(\pd(J_{\widetilde{G}}),\reg(J_{\widetilde{G}}))=(p-2,r).
		$$
		Set $G=\widetilde{G}*K_1$. By Lemma \ref{Lemma:KumarSarkar} $\pd(J_G)=\pd(J_{\widetilde{G}})+2=p$ and by formula (\ref{eq:RegJoinGraphs}), $\reg(J_G)=\reg(J_{\widetilde{G}})=r$. Hence $(p,r)\in\textup{pdreg}(n)$.
		
		Suppose now $p=n-2$ and $4\le r\le n-3$. We distinguish two more cases.
		\medskip\\
		{\bf Case} ${\bf p=n-2,\,\,r=n-3}$. Let $G_1=P_{n-4}$ and let $G_2=K_3$ be the complete graph on vertex set $\{n-3,n-2,n-1\}$. Let $\widetilde{G}$ be the disjoint union of $G_1,G_2$. Then $\widetilde{G}\in\textup{Graphs}(n-1)$ and by Theorem \ref{Thm:regGbounds}(i)-(ii) and Remark \ref{Rem:Gdecomp},
		$$
		(\pd(J_{\widetilde{G}}),\reg(J_{\widetilde{G}}))=((n-4)-2+(3-2)+1,n-4+2-1)=(n-4,n-3).
		$$
		Let $G=\widetilde{G}*K_1$. Since $\widetilde{G}$ is disconnected and $\reg(J_{\widetilde{G}})=n-3\ge3$, Lemma \ref{Lemma:KumarSarkar} and the formula (\ref{eq:RegJoinGraphs}) yield that
		$$
		(\pd(J_G),\reg(J_G))=(\max\{n-4+2,n-3\},n-3)=(n-2,n-3).
		$$
		Hence $(n-2,n-3)\in\textup{pdreg}(n)$.
		\medskip\\
		\textbf{Case} ${\bf p=n-2,\,\,4\le r\le n-4}$. Then $p-3=n-5=(n-3)-2$ and $3\le r-1\le(n-3)-2$. Thus, by induction there exists a graph $G_1\in\textup{Graphs}(n-3)$ such that $\pd(J_{G_1})=p-3$ and $\reg(J_{G_1})=r-1$. Let $\widetilde{G}$ be the disjoint union of $G_1$ and the edge $\{n-2,n-1\}$. By Remark \ref{Rem:Gdecomp} we have $\pd(J_{\widetilde{G}})=p-2$ and $\reg(J_{\widetilde{G}})=r$. Let $G=\widetilde{G}*K_1$. By Lemma \ref{Lemma:KumarSarkar},
		$$
		\pd(J_G)=\max\{\pd(J_{\widetilde{G}})+2,n-3\}=\max\{p+2,n-3\}=n-2=p
		$$
		and by formula (\ref{eq:RegJoinGraphs}), $\reg(J_G)=\reg(J_{\widetilde{G}})=r$. Hence, $G\in\textup{Graphs}(n)$,
		$$
		(\pd(J_G),\reg(J_G))=(n-2,r),
		$$
		and so $(n-2,r)\in\textup{pdreg}(n)$.
		
		Now the inductive proof of the \textbf{Claim $(*)$} is completed. Indeed, by Remarks \ref{Rem:pd=2n-5,2n-6,Connected}, \ref{Rem:reg=3,GConnected}, \ref{Rem:Gconnectr=n-2} the claim holds for all pairs $(p,r)\in\textup{pdreg}(n)\setminus A_n$, $p\ge n-2$, with $p=2n-6$ or $p=2n-r$ or $r=3$ or $r=n-2$. For all other pairs $(p,r)\in\textup{pdreg}(n)\setminus A_n$ with $p\ge n-2$, the claim also holds because the various graphs $\widetilde{G}*K_1$ constructed are connected.\hfill$\square$\smallskip
		
		Having acquired {\bf Claim $(*)$}, we prove the theorem. Let $n\ge7$ and let $(p,r)\in\textup{pdreg}(n)$. Depending on the values of $r$ we distinguish several cases.
		\medskip\\
		\textbf{Case} ${\bf r=2}$ or ${\bf r=n}$. Then $p=n-2$ and $(n-2,2),(n-2,n)\in\textup{pdreg}(n)$ by Theorem \ref{Thm:regGbounds}(i)-(ii).
		\medskip\\
		{\bf Case ${\bf r=3}$}. Then $n-3\le p\le 2n-5$ by Proposition \ref{Prop:refinedPd1} and for any such $p$, $(p,r)\in\textup{pdreg}(n)$ by Corollary \ref{Cor:reg=3}.
		\medskip\\
		{\bf Case ${\bf 4\le r\le\lfloor\frac{n}{2}\rfloor+1}$}. If $n-2\le p\le 2n-5$, the pairs $(p,r)$ belong to $\textup{pdreg}(n)$ by the {\bf Claim $(*)$}. Suppose $n-r\le p\le n-3$. Then $(p-1,r-1)$ belongs to $\textup{pdreg}(n-2)$. To see why this is true, note that $3\le r-1\le\lfloor\frac{n-2}{2}\rfloor+1$ and also $(n-2)-(r-1)=n-1-r\le p-1\le(n-2)-2$. Therefore, by induction there exists $\widetilde{G}\in\textup{Graphs}(n-2)$ such that
		$$
		(\pd(J_{\widetilde{G}}),\reg(J_{\widetilde{G}}))=(p-1,r-1).
		$$
		Set $G=\widetilde{G}\sqcup K_2$. Then $G\in\textup{Graphs}(n)$ and
		\begin{align*}
		(\pd(J_G),\reg(J_G))&=(\pd(J_{\widetilde{G}})+\pd(J_{K_2})+1,\reg(J_{\widetilde{G}})+\reg(J_{K_2})-1)\\
		&=(p-1+0+1,r-1+2-1)=(p,r).
		\end{align*}
		Consequently, $(p,r)\in\textup{pdreg}(n)$.
		\medskip\\
		{\bf Case ${\bf\lceil\frac{n}{2}\rceil+1\le r\le n-3}$}. If $n-2\le p\le 2n-5$, the pairs $(p,r)$ belong to $\textup{pdreg}(n)$ by the {\bf Claim $(*)$}. Suppose $r-2\le p\le n-3$. Then $(p-1,r-1)\in\textup{pdreg}(n-2)$. Indeed, $\lceil\frac{n-2}{2}\rceil+1\le r-1\le(n-2)-2$ and so $(r-1)-2\le p-1\le (n-2)-2$. Hence, by induction there exists $\widetilde{G}\in\textup{Graphs}(n-2)$ such that
		$$
		(\pd(J_{\widetilde{G}}),\reg(J_{\widetilde{G}}))=(p-1,r-1).
		$$
		Let $G=\widetilde{G}\sqcup K_2$. Arguing as before, we obtain $(p,r)\in\textup{pdreg}(n)$, as desired.
		\medskip\\
		{\bf Case} ${\bf r=n-2}$. In this case $n-4\le p\le 2n-5$ and all pairs $(p,n-2)\in\textup{pdreg}(n)$ by virtue of Proposition \ref{Prop:reg=n-2}.
		
		The inductive proof is complete, and the theorem is proved.
	\end{proof}

	Denote by $\textup{CGraphs}(n)$ the set of all connected graphs on $n$ non-isolated vertices. We define the set
	$$
	\textup{pdreg}_{\textup{C}}(n)\ =\ \big\{(\pd(J_G),\reg(J_G)):G\in\textup{CGraphs}(n)\big\}.
	$$
	\begin{Corollary}
		For all $n\ge3$,
		$$
		\textup{pdreg}_{\textup{C}}(n)\ =\ \big\{(n-2,2),(n-2,n)\big\}\cup\bigcup_{r=3}^{n-2}\big(\bigcup_{p=n-2}^{2n-5}\{(p,r)\}\big)\cup A_{\textup{C},n},
		$$
		where $A_{\textup{C},n}=\{(p,r)\in\textup{pdreg}_{\textup{C}}(n):r=n-1\}$.
	\end{Corollary}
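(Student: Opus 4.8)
The plan is to prove the two inclusions separately, leaning on the connectedness lower bound of Theorem \ref{Thm:pdGconnect} and on the connected realizations already packaged in \textbf{Claim $(*)$} from the proof of Theorem \ref{Thm:pdreg(n)}. No new graph constructions should be needed.

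For the inclusion $\textup{pdreg}_{\textup{C}}(n)\subseteq(\dots)$, I would start with a pair $(p,r)=(\pd(J_G),\reg(J_G))$ for a connected graph $G\in\textup{CGraphs}(n)$. Since $G$ is connected, Theorem \ref{Thm:pdGconnect} gives $n-2\le p\le 2n-5$, and Theorem \ref{Thm:regGbounds} gives $2\le r\le n$. I then split on $r$: if $r=2$ then $G=K_n$ and $p=n-2$, yielding $(n-2,2)$; if $r=n$ then $G=P_n$ and $p=n-2$, yielding $(n-2,n)$; if $r=n-1$ the pair lies in $A_{\textup{C},n}$ by definition; and if $3\le r\le n-2$ the bounds on $p$ place $(p,r)$ inside the double union $\bigcup_{r=3}^{n-2}\bigcup_{p=n-2}^{2n-5}\{(p,r)\}$. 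This settles one direction purely from the general bounds.

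For the reverse inclusion I would treat each block of the claimed set in turn. The pairs $(n-2,2)$ and $(n-2,n)$ are realized by the connected graphs $K_n$ and $P_n$ respectively, by Theorem \ref{Thm:regGbounds}, and the pairs in $A_{\textup{C},n}$ belong to $\textup{pdreg}_{\textup{C}}(n)$ by definition. The substantive block is $3\le r\le n-2$, $n-2\le p\le 2n-5$. Here I would first observe that every such pair already lies in $\textup{pdreg}(n)$: reading off Theorem \ref{Thm:pdreg(n)}, for $3\le r\le\lfloor\frac{n}{2}\rfloor+1$ the admissible projective dimensions run from $n-r\ (\le n-2)$ up to $2n-5$, while for $\lceil\frac{n}{2}\rceil+1\le r\le n-2$ they run from $r-2\ (\le n-2)$ up to $2n-5$. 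Since $\lfloor\frac{n}{2}\rfloor+1$ and $\lceil\frac{n}{2}\rceil+1$ are consecutive integers, these two $r$-ranges cover all of $[3,n-2]$, and in both cases the whole interval $n-2\le p\le 2n-5$ is admissible. As none of these pairs has $r=n-1$, each of them lies in $\textup{pdreg}(n)\setminus A_n$ with $p\ge n-2$, so \textbf{Claim $(*)$} supplies a connected graph realizing it, giving the pair in $\textup{pdreg}_{\textup{C}}(n)$.

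The only point demanding care is the bookkeeping in the reverse inclusion: checking that the union of the two $r$-ranges appearing in Theorem \ref{Thm:pdreg(n)} leaves no integer $r\in[3,n-2]$ uncovered, and that the value $n-2$ always falls inside the corresponding $p$-interval. Once this indexing check is done, \textbf{Claim $(*)$} does all the work of producing the connected realizers, so I expect this mild verification to be the main (and essentially the only) obstacle.
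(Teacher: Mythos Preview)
Your proposal is correct and follows essentially the same approach as the paper: the paper's proof is a two-line argument invoking Theorem~\ref{Thm:pdGconnect} for the lower bound $\pd(J_G)\ge n-2$ on connected graphs and then appealing to \textbf{Claim~$(*)$}, which is exactly what you have unpacked in detail. One tiny wording issue: $\lfloor\frac{n}{2}\rfloor+1$ and $\lceil\frac{n}{2}\rceil+1$ are equal when $n$ is even and consecutive when $n$ is odd, but either way the two $r$-ranges cover $[3,n-2]$, so your conclusion stands.
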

	\begin{proof}
		For any $G\in\textup{CGraphs}(n)$ we have $\pd(J_G)\ge n-2$ by Theorem \ref{Thm:pdGconnect}. Thus, our statement follows from \textbf{Claim $(*)$} proved in the previous theorem.
	\end{proof}

	At present, we do not know yet the sets $A_n$ and $A_{\textup{C},n}$, for $n\ge7$. Note that by Corollary \ref{Cor:pd=2n-5} and Proposition \ref{Prop:pd=2n-6}, for all $n\ge 6$, if $G$ is a graph on $n$ non-isolated vertices and with $\reg(J_G)=n-1$, then $\pd(J_G)\le 2n-7$. On the other hand, if $n\ge7$ and $\reg(J_G)=n-1$, a much stronger bound for the projective dimension of $J_G$ seems to hold. Indeed, our experiments using \cite{MPNauty} suggest the following
	
	\begin{Conjecture}\label{Conj:reg=n-1}
		\rm Let $G$ be a graph on $n\ge7$ non-isolated vertices. Suppose that $\reg(J_G)=n-1$. Then $\pd(J_G)\le n$.
	\end{Conjecture}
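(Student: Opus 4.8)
The plan is to argue by induction on $n$, first settling the disconnected case outright and then reducing the connected case to a structural problem. So suppose $G=G_1\sqcup\cdots\sqcup G_c$ has $c\ge2$ components, each $G_i$ on $n_i\ge2$ non-isolated vertices. By formula (\ref{eq:regJGc}) and Theorem \ref{Thm:regGbounds}, $n-1=\sum_{i=1}^c\reg(J_{G_i})-(c-1)\le\sum_i n_i-(c-1)=n-c+1$, which forces $c=2$. Then $\reg(J_{G_1})+\reg(J_{G_2})=n=n_1+n_2$ with $\reg(J_{G_i})\le n_i$, so equality holds throughout and $G_i=P_{n_i}$ by Theorem \ref{Thm:regGbounds}(ii). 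Formula (\ref{eq:pdJGc}) then gives $\pd(J_G)=(n_1-2)+(n_2-2)+1=n-3\le n$. Hence I may assume $G$ connected.

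For connected $G$ the clique bound $\reg(J_G)\le n+2-|W|$, applied to a maximum clique $W$, forces $\omega(G)\le3$, so $G$ is $K_4$-free; in particular $G$ is not complete and, by Remark \ref{Rem:pd=2n-5,2n-6,Connected} together with Corollary \ref{Cor:pd=2n-5} and Proposition \ref{Prop:pd=2n-6}, already $\pd(J_G)\le2n-7$. I would next dispose of the cone case: if $G=\widetilde G*K_1$ with $\widetilde G$ on $n-1$ vertices, then $\widetilde G$ is non-complete (else $G=K_n$ has regularity $2$), so by (\ref{eq:RegJoinGraphs}) $n-1=\max\{\reg(J_{\widetilde G}),3\}$ forces $\reg(J_{\widetilde G})=n-1=|V(\widetilde G)|$, whence $\widetilde G=P_{n-1}$ and Lemma \ref{Lemma:KumarSarkar} gives $\pd(J_G)=\pd(J_{P_{n-1}})+2=n-1\le n$. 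Thus the fan is the unique cone in this class.

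The heart of the induction is the decomposable case. Suppose $G=G_1\cup_v G_2$ is decomposable in the sense of \cite{RR14}, and write $a=|V(G_1)|$, $b=|V(G_2)|$, so that $a+b-1=n$. By the formulas of \cite{HR18}, $\reg(J_G)=\reg(J_{G_1})+\reg(J_{G_2})-1$, and since $\reg(J_{G_i})\le|V(G_i)|$ by Theorem \ref{Thm:regGbounds}, the equation $\reg(J_{G_1})+\reg(J_{G_2})=n=a+b-1$ forces exactly one part, say $G_1$, to satisfy $\reg(J_{G_1})=a$, i.e.\ $G_1=P_a$ by Theorem \ref{Thm:regGbounds}(ii), while $\reg(J_{G_2})=b-1$ is almost maximal for $G_2$. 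As $b<n$, the inductive hypothesis (or Example \ref{Ex:InitialCasesPdReg} when $b\le6$) gives $\pd(J_{G_2})\le b$, and the projective-dimension formula of \cite{HR18} yields $\pd(J_G)=\pd(J_{P_a})+\pd(J_{G_2})+1\le(a-2)+b+1=n$. So whenever $G$ has a simplicial cut vertex, the induction closes.

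The main obstacle is the remaining case: $G$ connected, $2$-connected (hence neither decomposable nor, in general, a cone), with $\reg(J_G)=n-1$ and $\omega(G)\le3$. Here the connectivity bound $\pd(J_G)\ge n+\ell(G)-3$ from the proof of Theorem \ref{Thm:pdGconnect} shows that the conjecture is equivalent to forcing $\ell(G)\le3$, and I expect in fact $\ell(G)\le2$ with $\pd(J_G)\in\{n-1,n\}$ throughout. The difficulty is that there is at present no structure theorem for connected graphs of almost maximal regularity: without knowing that regularity $n-1$ forbids high vertex-connectivity, the Ohtani sequence (\ref{eq:ShortExactSeqOhtani}) and the edge-removal sequence (\ref{eq:ShortExactSeqRemoveEdge}) need not terminate in subgraphs that retain regularity $n-2$, so the induction cannot be fed in the $2$-connected case. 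I therefore expect the crux to be a classification showing that such graphs are essentially paths carrying a bounded amount of extra structure and of low vertex-connectivity, after which the desired bound $\pd(J_G)\le n$ would follow from the cone and decomposition computations above.
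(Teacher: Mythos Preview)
The statement you are attempting to prove is presented in the paper as a \emph{conjecture}, not a theorem: the authors give no proof and explicitly record it as open, verifying it only computationally for $n=7,8,9$ via \texttt{NautyGraphs}. There is therefore no paper proof to compare against.

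Your partial arguments for the disconnected, cone, and decomposable cases are correct and constitute genuine (if modest) progress toward the conjecture. Two minor remarks on the final paragraph: first, your reduction to ``$2$-connected'' is not justified---a connected graph that is neither a cone nor decomposable may still possess a cut vertex (one that is internal in at least one of the two pieces), so the residual class is strictly larger than the $2$-connected graphs. Second, the assertion that ``the conjecture is equivalent to forcing $\ell(G)\le3$'' is incorrect: the bound $\pd(J_G)\ge n+\ell(G)-3$ from the proof of Theorem~\ref{Thm:pdGconnect} shows only that $\pd(J_G)\le n$ \emph{implies} $\ell(G)\le3$, not the converse. But you yourself identify the real obstruction: absent a structural classification of connected graphs with $\reg(J_G)=n-1$, there is no mechanism to close the induction in the indecomposable case, and this is precisely why the authors leave the statement as a conjecture.
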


	Using \cite{MPNauty} we could verify our conjecture for $n=7,8,9$.\bigskip
	
	\noindent\textbf{Acknowledgment.} The authors thank the anonymous referee for his/her careful
	reading and helpful suggestions, that allowed us to improve the quality of the paper.
	The authors acknowledge support of the GNSAGA of INdAM (Italy).
	
	A. Ficarra was partly supported by the Grant JDC2023-051705-I funded by MICIU/AEI/10.13039/501100011033 and by the FSE+.

\end{document}